\newtheorem{theorem}{\color{black}\indent Theorem}[section]
\newtheorem{lemma}{\color{black}\indent Lemma}[section]
\newtheorem{remark}{\color{black}\indent Remark}[section]
\newtheorem{definition}{\color{black}\indent Definition}[section]
\newtheorem{cor}{\color{black}\indent Corollary}[section]
\newcommand{\wconv}{\xrightarrow{~w~}}
\newcommand{\convid}{\xrightarrow{~d~}}
\newcommand{\equivid}{\xlongequal{~d~}}
\DeclareMathOperator{\trace}{{\bf tr}}
\DeclareMathOperator{\hess}{{\bf Hess}}
\DeclareMathOperator{\jac}{{\bf J}}
\DeclareMathOperator{\diag}{diag}
\begin{document}

\title{Affine Periodic Solutions of Stochastic Differential Equations \footnote{This work was supported by National Basic Research Program of China (grant No. 2013CB834100), National Natural Science Foundation of China (grant No. 11901231), National Natural Science Foundation of China (grant No. 11901080),  National Natural Science Foundation of China (grant No. 11571065) and National Natural Science Foundation of China (grant No. 11171132).}}

\author{ Xiaomeng Jiang$^a$\thanks{E-mail address: jxmlucy@hotmail.com}, ~ Xue Yang$^{a,b}$\thanks{E-mail address: yangxuemath@163.com}, ~ Yong Li$^{a,b}$\thanks{E-mail address: liyongmath@163.com}~\footnote{Corresponding author}
\\
{$^a$College of Mathematics, Jilin University,}
\\
{ Changchun 130012, P. R. China.}
\\
{$^b$School of Mathematics and statistics and}
\\
{Center for Mathematics and Interdisciplinary Sciences, }
\\
{Northeast Normal University, Changchun 130024, P. R. China}
}
\date{}

\maketitle

\begin{abstract}
For stochastic affine periodic systems, we establish a law of large numbers including Halanay-type criterion and a LaSalle-type stationary oscillation principle to obtain the existence and stability of affine periodic solutions in distribution. As applications, we show the existence and asymptotic stability of stochastic affine periodic solutions in distribution via Lyapunov's method.

{\bf Keywords} {Stochastic differential equations, Affine periodic solutions in distribution, Law of large numbers, LaSalle's stationary oscillation principle.}
\end{abstract}

\section{Introduction}

In this paper, we consider following stochastic differential equation
\begin{equation}\label{sde}
dX(t)=f(t,X(t))dt+g(t,X(t))dW(t),
\end{equation}
where $\{W(t)\}$ is an $m$-dimensional standard Brownian motion. This system is called a stochastic $(Q,T)$-affine periodic system if functions $f:\mathbb{R}^+\times\mathbb{R}^l\rightarrow\mathbb{R}^l$ and $g:\mathbb{R}^+\times\mathbb{R}^l\rightarrow\mathbb{R}^{l\times m}$ are Borel-measurable with the following $(Q,T)$-affine periodicity
\begin{equation}\label{eap}
\begin{split}
&f(t+T,x)=Qf(t,Q^{-1}x),\\
&g(t+T,x)=Qg(t,Q^{-1}x),
\end{split}
\end{equation}
for some invertible affine matrix $Q\in\mathbb{R}^{l\times l}$, period $T>0$ and all $t\in\mathbb{R}^+$, $x\in\mathbb{R}^l$. When $Q$ is an orthogonal matrix, \eqref{sde} can describe some models such as rigid body rotation under stochastic perturbation.

The probability density $p:\mathbb{R}^+\times\mathbb{R}^l\rightarrow\mathbb{R}^+$ for the solution $X$ of equation \eqref{sde} satisfies a Fokker-Planck equation
\begin{equation}\label{fpe}
\frac{\partial}{\partial t}p(t,x)=-\sum_i\frac{\partial}{\partial x_i}\big(f_i(t,x)p(t,x)\big)+\frac{1}{2}\sum_{ij}\frac{\partial^2}{\partial x_i\partial x_j}\big((gg^\top)_{ij}(t,x)p(t,x)\big),
\end{equation}
where $\top$ represents transpose. Moreover, this probability density is called a $(Q,T)$-affine periodic solution of equation \eqref{fpe} provided that for all $(t,x)\in\mathbb{R}^+\times\mathbb{R}^l$,
\begin{equation}\label{introdefp}
p(t+T,x)=p(t,Q^{-1}x).
\end{equation}
The solution $X$ of \eqref{sde} is called a $(Q,T)$-affine periodic solution if its probability density satisfies condition \eqref{introdefp}. In present paper, we touch affine periodic phenomena under stochastic perturbation.
 
Periodic phenomena have already been observed since Kepler and Newton's time when they studied orbits of planets in solar system. Yet, the definition of periodic solutions was first introduced by H. Poincar\'e in his famous works \cite{P1,P2,P3}. Since then, the existence of periodic solutions is the central topic of dynamical systems all along. For a deterministic periodic differential equation (i.e. $Q=I_l$ where $I_l$ is an $l$-dimensional identity matrix and $g\equiv 0$), namely,
\begin{equation}\label{ode}
x'(t)=f(t,x(t)),
\end{equation}
Massera \cite{M} showed that for $2$-dimensional ordinary differential equations, the boundedness of solutions is sufficient to obtain the existence of periodic solutions. As is well known, the boundedness of solutions fails the sufficiency for higher dimensions until Halanay posed the following criterion \cite{H}
\[
\lim_{t\rightarrow+\infty}(x(t+T)-x(t))=0,
\]
which leads to the existence of periodic solutions. Under some general hypotheses, this condition can be changed to another criterion by LaSalle stationary oscillation principle \cite{LL}. That is, there exists a continuous function $a:\mathbb{R}^+\rightarrow\mathbb{R}^+\backslash\{0\}$ satisfying $\lim\limits_{t\rightarrow\infty}$a(t)=0, such that
\[
|x(t)-y(t)|\leq a(t)|x_0-y_0|
\]
for all $t\geq 0$, where $x$ and $y$ are solutions of \eqref{ode} started from $x_0$ and $y_0$ respectively.

The development of thermodynamics in the 20th century contributes to the research of stochastic dynamical systems. After A. Einstein, A. D. Fokker, M. Planck and A. Kolmogorov's works \cite{E,F,PM,K1}, Fokker-Planck equation is applied to describe the distribution state of diffusion. Based on the construction of random walk and Wiener process, stochastic calculus is established by It\^o \cite{IK}, Stratonovich \cite{SRL} and Maliavin \cite{MP} et al.. The theory of stochastic differential equations has been concerned gradually.

It is a natural question whether the system admits a periodic solution after a random perturbation. Among all others, recurrence is a respectively rough concept to describe the ``periodicity'' of stochastic processes. The definition of recurrence for stochastic processes was introduced by A. Kolmogorov \cite{K2}, which concerns the probability and the frequency of that the diffusion particle returns to a certain state. Limited by the lack of analysis tools, numerous works, such as \cite{HL, HJLY1,HJLY2, HJLY3, HJLY4,BKR}, are devoted to recurrence and stationary measure. 

However, it is more difficult to study other better recurrence, e.g. stochastic periodicity. Although there has been some works (we refer to \cite{CL1,KR,CL2,FWZ}), this topic is still thought to be  a challenging problem. On one hand, for any fixed time, the point on the orbit of the solution in deterministic system becomes a random variable in stochastic system. Point-wise equivalence of two adjacent states between a period cannot display the diffusion of the process. This makes us to consider some macro periodicity, like in the sense of distribution. On the other hand, the space of stochastic processes is non compact. It is necessary to find effective analysis tools. So far, besides Banach's fixed point theorem, almost all other fixed point theorems do not work on stochastic periodicity.

Recently, some breakthroughs have been made on this issue. Z. Liu and W. Wang \cite {LW} obtained the existence of stochastic almost periodic solutions in distribution by Favard separation method. F. Chen el at. \cite{CH} surmounted the difficulty of convergence by Skorokhod theorems \cite{Sav} and gave a weak Halanay type criterion to get the existence of periodic solutions for stochastic differential equations in the sense of distribution. This criterion can be regarded as a law of large numbers for periodic stochastic systems. They also obtained the existence and asymptotic stability of periodic solutions in distributions via Lyapunov's method. M. Ji el at. \cite{JQ} studied existence of periodic solutions in distribution for stochastic differential equations with irregular coefficients.

For deterministic system \eqref{ode}, another special kind of recurrence, affine periodicity, is stepping into the field of researchers. In such systems, $f$ satisfying \eqref{eap} is an affine periodic function, where geometry similarity is taken into account. The affine periodic systems contain several special cases, such as periodic systems ($Q=I_l$), anti-periodic systems ($Q=-I_l$) and rotation periodic systems ($Q\in\mathcal{O}(l)$), which are discussed in many literatures like \cite{WCL,HH,MJ,CAY}. For general affine matrix $Q$, Y. Li et al. \cite{CL,LH,ML,WYL,WYLL,XYL} obtained the existence of affine-periodic solutions for several kinds of deterministic affine-periodic systems. 

Similar to the periodic case, it is also interesting to checkout the preservation of the affine-periodicity for deterministic affine-periodic systems after a random perturbation. The present paper is devoted to obtaining the existence of stochastic affine-periodic solutions for equation \eqref{sde}. The definition of such solutions is in the sense of distribution. Unlike the convergence results in Skorokhod theorems, we also adopt Theorem 3.1 in \cite{LW} which displays uniform convergence in distribution of solutions for a sequence of stochastic differential equations. Applying the techniques of Prohorov's theorem \cite{P} and Arzela-Ascoli theorem (Theorem 7.17 in \cite{K}), the convergence is uniform on any compact intervals of $\mathbb{R}^+$, where the uniformly boundedness in mean square of the solutions is necessary. Since that the construction of the solution in our paper can be concentrated on $[0,T]$, the uniformly boundedness on this interval is implied by the boundedness of initial state. Thus, we rewrite this theorem in a local version. By this convergence result, we get the existence of stochastic affine-periodic solutions in distribution where the boundedness condition in \cite{CH} is weakened and a weaker Halanay type criterion is given. We call it a law of large numbers on periodicity. We also get a stochastic version of LaSalle's stationary oscillation principle by constructing Poincar\'e map for probability density functions of the solutions of \eqref{sde}. As applications, the existence and asymptotic stability of stochastic affine-periodic solutions are obtained via Lyapunov's method \cite{LAM}, which is a classical technique in dynamical system. Here, the stability is in the sense of mean square and distribution. Now we can conclude that some basic and effective criteria have been established for the existence of stochastic affine periodic solutions in distribution.

The rest of this paper is organized as follows. In Section \ref{pre}, we give some preliminaries. We introduce the notations in our paper and the definitions of related concepts. Some results of convergence are also stated in this section. In Section \ref{exi}, we first list some reasonable hypotheses. We show the existence and asymptotic stability of stochastic affine-periodic solutions in distribution under these conditions, which are law of large numbers on affine periodicity and LaSalle's type principle. These are our main results in this paper. In Section \ref{app}, we illustrate our results by some applications via Lyapunov's method and an example.

\section{Preliminary}\label{pre}

In this section, we give the notations and definitions used in this paper. Some useful results on convergence are also stated.

\subsection{Notations and definitions}

Throughout this paper, we use the following notations.
%\begin{table}[H]
%\centering
\begin{longtable}[H]{llp{9cm}l}
\hline
\hspace{0.01in}&\hspace{0.01in}&\hspace{0.01in}&\hspace{0.01in}\\
~&$(\Omega,\mathcal{F},\mathbb{P})$ & probability space with sample space $\Omega$, filtration $\mathcal{F}$ and natural probability measure $\mathbb{P}$&~\\
\hspace{0.01in}&\hspace{0.01in}&\hspace{0.01in}&\hspace{0.01in}\\
 ~&$L^2(\mathbb{P},\mathbb{R}^l)$ & space of all $\mathbb{R}^l$-valued random variables $\xi$ satisfying $\mathbb{E}|\xi|^2<\infty$&~\\
\hspace{0.01in}&\hspace{0.01in}&\hspace{0.01in}&\hspace{0.01in}\\
~&$\mathcal{P}(\mathbb{R}^l)$&set of Borel probability measures on $\mathbb{R}^l$&~\\
\hspace{0.01in}&\hspace{0.01in}&\hspace{0.01in}&\hspace{0.01in}\\
~&$\mathcal{P}^2(\mathbb{R}^l)$&set of Borel probability measures of random variables in $L^2(\mathbb{P},\mathbb{R}^l)$&~\\
\hspace{0.01in}&\hspace{0.01in}&\hspace{0.01in}&\hspace{0.01in}\\
~&$X_\xi$&solution of \eqref{sde} with initial state $\xi$ which is omitted if there is no ambiguity in context&~\\
\hspace{0.01in}&\hspace{0.01in}&\hspace{0.01in}&\hspace{0.01in}\\
~&$p_\xi$&probability density of random variable $\xi$&~\\
\hspace{0.01in}&\hspace{0.01in}&\hspace{0.01in}&\hspace{0.01in}\\
~&$p_{X_\xi(t)}$&probability density of stochastic process $X_\xi$ at $t$&~\\
\hspace{0.01in}&\hspace{0.01in}&\hspace{0.01in}&\hspace{0.01in}\\
~&$\widetilde{~\bullet~}$&statistics in probability space $(\tilde{\Omega},\tilde{\mathcal{F}},\tilde{\mathbb{P}})$ different from $(\Omega,\mathcal{F},\mathbb{P})$&~\\
\hspace{0.01in}&\hspace{0.01in}&\hspace{0.01in}&\hspace{0.01in}\\
~&$I_l$&$l$-dimensional unit matrix&~\\
\hspace{0.01in}&\hspace{0.01in}&\hspace{0.01in}&\hspace{0.01in}\\
~&$A^\top$&transpose of matrix $A$&~\\
\hspace{0.01in}&\hspace{0.01in}&\hspace{0.01in}&\hspace{0.01in}\\
 ~&$\trace A$&trace of matrix $A$&~\\
\hspace{0.01in}&\hspace{0.01in}&\hspace{0.01in}&\hspace{0.01in}\\
 ~&$\hess h$&Hessian matrix of function $h$&~\\
\hspace{0.01in}&\hspace{0.01in}&\hspace{0.01in}&\hspace{0.01in}\\
~&$\jac_h$&Jacobian matrix of function $h$ with respect to $x\in\mathbb{R}^l$&~\\
\hspace{0.01in}&\hspace{0.01in}&\hspace{0.01in}&\hspace{0.01in}\\
 ~&$\mathcal{O}(l)$&set of all $l\times l$ orthogonal matrices&~\\
\hspace{0.01in}&\hspace{0.01in}&\hspace{0.01in}&\hspace{0.01in}\\
\hline
\end{longtable}
%\end{table}

We continue to use the definitions in \cite{CH,LW} for the norms of Lipschitz continuous real-valued function $h$ on $\mathbb{R}^l$, distance between measures $\mu$ and $\nu$ in $\mathcal{P}(\mathbb{R}^l)$ and the norm of random variables and stochastic processes,
\begin{align*}
\|h\|_{\infty}&=\sup_{x\in\mathbb{R}^l}|h(x)|,\\
\|h\|_L&=\sup_{x,y\in\mathbb{R}^l,x\neq y}\frac{|h(x)-h(y)|}{|x-y|},\\
\|h\|_{BL}&=\max\{\|h\|_{\infty},\|h\|_L\},\\
d_{BL}(\mu,\nu)&=\sup_{\|h\|_{BL}\leq 1}\left|\int hd(\mu-\nu)\right|,\\
\|\xi\|_2&=\left(\mathbb{E}|\xi|^2\right)^{\frac{1}{2}}=\left(\int_{\Omega}|\xi|^2d\mathbb{P}\right)^{\frac{1}{2}},\\
\|X_\xi\|_{2,I}&=\max_{t\in I}\mathbb{E}|X_\xi(t)|^2,~\text{for}~I\subset\mathbb{R}^+.
\end{align*}
 
Similar to the definitions of periodicity in distribution and stochastic periodic solutions in distribution in \cite{CH}, the definitions of $(Q,T)$-affine periodicity in distribution and stochastic $(Q,T)$-affine periodic solutions in distribution are given as follows for some invertible affine matrix $Q$ and period $T$.

\begin{definition}
An $\mathbb{R}^l$-valued stochastic process $X_\xi$ is said to be $(Q,T)$-affine periodic in distribution if its probability density $p_{X_\xi(\cdot)}$ is $(Q,T)$-affine periodic, namely,
\begin{equation}\label{pqtd}
p_{X_\xi(t+T)}(x)=p_{X_\xi(t)}(Q^{-1}x)~\forall t\in\mathbb{R}^+,~x\in\mathbb{R}^l.
\end{equation}
\end{definition}

Using notation ``$\equivid$'' to represent the equality in distribution of two stochastic processes or random variables, $X_\xi$ is $(Q,T)$-affine periodic in distribution means that
\[
Q^{-1}X_\xi(\cdot+T)\equivid X_\xi(\cdot).
\]

\begin{definition}\label{def_QTAPs}
The solution $X_\xi$ of stochastic differential equation \eqref{sde} with $f$ and $g$ being $(Q,T)$-affine periodic (satisfying \eqref{eap}) is said to be a $(Q,T)$-affine periodic solution in distribution provided that the conditions below hold.
\begin{itemize}
\item[{\bf (C1)}]Stochastic process $X_\xi$ is $(Q,T)$-affine periodic in distribution.
\item[{\bf (C2)}]There exists a stochastic process $\tilde{W}$, which has the same distribution with $W$, such that $Q^{-1}X_\xi(\cdot+T)$ is a solution of stochastic differential equation
    \begin{equation}\label{sde1t}
    dY(t)=f(t,Y(t))dt+g(t,Y(t))d\tilde{W}(t).
    \end{equation}
\end{itemize}
\end{definition}

\begin{definition}\label{assip}
The solution $X_\xi$ of \eqref{sde} is said to be a mean square asymptotically stable $(Q,T)$-affine periodic solution in distribution provided that it is a $(Q,T)$-affine periodic solution of \eqref{sde} in distribution and
\begin{itemize}
\item[{\bf(C3)}]$X_\xi$ is asymptotically stable in mean square. That is,
\[
\mathbb{E}|X_\xi(t)-X_\eta(t)|^2\rightarrow0~as~t\rightarrow\infty,
\]
where $\eta$ is any other initial state.
\end{itemize}
\end{definition}

\begin{definition}\label{assid}
The solution $X_\xi$ of \eqref{sde} is said to be an asymptotically stable $(Q,T)$-affine periodic solution in distribution provided that it is a $(Q,T)$-affine periodic solution of \eqref{sde} in distribution and
\begin{itemize}
\item[{\bf(C4)}]$X_\xi$ is asymptotically stable in distribution. That is,
\[
d_{BL}\left(p_{X_\xi(t)},p_{X_\eta(t)}\right)\rightarrow 0~as~t\rightarrow\infty,
\]
where $\eta$ is any other initial state.
\end{itemize}
\end{definition}

We also need to use the definitions below in this paper.
\begin{definition}
A sequence of measures $\{\mu_n\}\subset\mathcal{P}(\mathbb{R}^l)$ is said to be weakly convergent to a measure $\mu$ provided that for every continuous bounded function $q$ on $\mathbb{R}^l$,
\[
\int_{\mathbb{R}^l}q(x)\mu_n(dx)\rightarrow\int_{\mathbb{R}^l}q(x)\mu(dx),~n\rightarrow\infty.
\]
\end{definition}

\begin{definition}
A sequence of $\mathbb{R}^l$-valued stochastic processes $\{Y_n\}$ is said to be convergent in distribution to a $\mathbb{R}^l$-valued stochastic process $Y$ if for all $t\in\mathbb{R}^+$, $\{p_{Y_n(t)}\}$ is weakly convergent to $p_{Y(t)}$.
\end{definition}

If a sequence of measures $\{\mu_n\}\subset\mathcal{P}(\mathbb{R}^l)$ is weakly convergent to a measure $\mu$ and a sequence of stochastic processes $\{Y_n\}$ is convergent in distribution to a stochastic process $Y$, we use the following notations.
\[
\begin{split}
&\mu_n\wconv\mu,\\
&Y_n\convid Y.
\end{split}
\]

\subsection{Convergence of random variables and stochastic processes}

The results of weakly compactness for stochastic processes are also important in our discussion. Two basic results which were given by Skorokhod \cite{Sav} are stated here.

\begin{lemma}\label{npslimit}
Let $\{Y_n\}_{n=1}^{\infty}$ be a sequence of stochastic processes in $\mathbb{R}^l$ such that for every $\{t_i\}_{i=1}^k\subset\mathbb{R}^+$ the joint distribution of $\{Y_n(t_i)\}_{i=1}^k$ is weakly convergent as $n\rightarrow\infty$ and the sequence of $\{Y_n\}_{n=1}^{\infty}$ is uniformly stochastic continuous, that is,
\begin{equation}\label{pusc}
\sup_{n,|s_1-s_2|<\Delta t}\mathbb{P}\{|Y_n(s_1)-Y_n(s_2)|>\varepsilon\}\rightarrow 0,~\Delta t\rightarrow 0.
\end{equation}
Then a sequence of stochastic processes $\{\tilde{Y}_n\}_{n=1}^{\infty}$ can be constructed in another probability space $(\tilde{\Omega},\tilde{\mathcal{F}},\tilde{\mathbb{P}})$ such that
\begin{itemize}
\item[(i)]$\tilde{Y}_n\convid \tilde{Y}$.
\item[(ii)]The finite-dimensional distributions of the processes $Y_n$ and $\tilde{Y}_n$ coincide for $n>0$.
\end{itemize}
\end{lemma}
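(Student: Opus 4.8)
The plan is to recognize this as Skorokhod's representation/coupling theorem for sequences of processes and to reconstruct it through an infinite-product weak convergence argument combined with a dense-time extension. Throughout I would fix a countable dense set $D=\{r_1,r_2,\dots\}\subset\mathbb{R}^+$ (for instance the nonnegative rationals) and concentrate first on the restricted processes $Y_n|_D$, viewed as random elements of the Polish space $S:=(\mathbb{R}^l)^D$ with the product topology.

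First I would upgrade the hypothesis of finite-dimensional weak convergence to full weak convergence of the laws $\mathrm{Law}(Y_n|_D)$ on $S$. Each coordinate marginal $\mathrm{Law}(Y_n(r))$ converges weakly (the $k=1$ case of the hypothesis) and is therefore tight; tightness of every coordinate on a countable product yields tightness of $\{\mathrm{Law}(Y_n|_D)\}_n$ on $S$ (choose compacts $K_i$ in each coordinate with mass exceeding $1-\e 2^{-i}$, so that $\prod_i K_i$ is compact in the product topology). Combined with the assumed convergence of all finite-dimensional distributions, which identifies the unique possible weak limit and shows that the limit finite-dimensional laws form a consistent family, this gives $\mathrm{Law}(Y_n|_D)\wconv\nu$ for a probability measure $\nu$ on $S$ whose finite-dimensional distributions are the prescribed limits. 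By Kolmogorov's extension theorem $\nu$ is the restriction to $D$ of the law of a process $\tilde Y$ on all of $\mathbb{R}^+$.

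Next I would apply the classical Skorokhod representation theorem on the Polish space $S$: there exist $S$-valued random elements $Z_n,Z$ on $\tilde\Omega=[0,1]$ with Lebesgue measure such that $\mathrm{Law}(Z_n)=\mathrm{Law}(Y_n|_D)$, $\mathrm{Law}(Z)=\nu$, and $Z_n\to Z$ almost surely in $S$, that is, $Z_n(r)\to Z(r)$ a.s.\ for every $r\in D$. Setting $\tilde Y_n(r):=Z_n(r)$ and $\tilde Y(r):=Z(r)$ for $r\in D$ already secures almost sure convergence along the dense times and the matching of all finite-dimensional distributions indexed by $D$.

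The remaining and most delicate step is to extend each $\tilde Y_n$ (and $\tilde Y$) from $D$ to all of $\mathbb{R}^+$ while preserving finite-dimensional distributions, and here uniform stochastic continuity \eqref{pusc} is the essential tool. Since the finite-dimensional laws of $\tilde Y_n$ coincide with those of $Y_n$ on $D$, the bound \eqref{pusc} holds for the $\tilde Y_n$ uniformly in $n$, so for $t\notin D$ and any $r_j\in D$ with $r_j\to t$ the variables $\tilde Y_n(r_j)$ are Cauchy in probability uniformly in $n$; I would define $\tilde Y_n(t)$ as a version of this limit in probability, check independence of the approximating sequence, and verify that it reproduces the finite-dimensional laws of $Y_n$ at arbitrary times by approximating those times from within $D$ and passing to the limit against bounded continuous test functions. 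Conclusion (ii) then holds by construction, and for (i), fixing $t\in\mathbb{R}^+$ and combining the a.s.\ convergence on $D$ with \eqref{pusc} yields $\tilde Y_n(t)\to\tilde Y(t)$ in probability, hence $p_{\tilde Y_n(t)}\wconv p_{\tilde Y(t)}$, which is precisely $\tilde Y_n\convid\tilde Y$. The main obstacle I anticipate is exactly this passage from the countable dense skeleton to all times: making rigorous that the coupled processes retain the \emph{exact} finite-dimensional laws of $Y_n$ at times outside $D$, rather than only approximating them, which is where the uniform (in $n$) control from \eqref{pusc} must be used carefully.
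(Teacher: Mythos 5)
The paper does not prove this lemma at all: it is quoted verbatim from Skorokhod's monograph \cite{Sav} as a known result, so there is no in-paper argument to compare against. Your reconstruction is correct and follows the standard modern route: restrict to a countable dense skeleton $D$, note that coordinatewise tightness (from weak convergence of each one-dimensional marginal plus Prohorov) gives tightness on the Polish product space $(\mathbb{R}^l)^D$, combine with the assumed finite-dimensional convergence to identify a unique weak limit, invoke the abstract Skorokhod representation theorem on that product space to get an almost surely convergent coupling on $[0,1]$, and then use the uniform stochastic continuity \eqref{pusc} to extend each coupled process from $D$ to all of $\mathbb{R}^+$ as a limit in probability, checking that the finite-dimensional laws at arbitrary times are recovered by approximating from $D$ and that \eqref{pusc} also upgrades the a.s.\ convergence on $D$ to convergence in probability at every $t$, which is more than the pointwise-in-$t$ convergence in distribution that conclusion (i) actually asks for. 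The only difference from the original source is one of packaging: Skorokhod's 1965 proof builds the coupling by hand, partitioning the state space into small cells and using a single auxiliary uniform random variable on $([0,1],\mathcal{B},\mathrm{Leb})$ to realize all the laws simultaneously, whereas you invoke the now-standard representation theorem on a Polish space as a black box; your version is shorter and equally rigorous, at the cost of relying on a theorem that historically postdates (and generalizes) the construction being reproved. The one step worth writing out carefully if this were to be included is the verification that the limits in probability defining $\tilde{Y}_n(t)$ for $t\notin D$ do not depend on the approximating sequence and reproduce the exact joint laws of $Y_n$ at those times; your sketch of that step (interleaving sequences, passing to the limit against bounded continuous test functions, using that each $Y_n$ is itself stochastically continuous by \eqref{pusc}) is the right one.
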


\begin{lemma}\label{subconv}
Assume that the sequence of stochastic processes $\{Y_n\}_{n=1}^{\infty}$ is uniformly stochastic continuous (satisfying condition \eqref{pusc}) and uniformly bounded in probability, that is,
\begin{equation}\label{pub}
\sup_{t,n}\mathbb{P}\{Y_n(t)>M\}\rightarrow 0,~M\rightarrow\infty.
\end{equation}
Then $\{Y_n\}_{n=1}^{\infty}$ contains a subsequence $\{Y_{n_k}\}_{k=1}^{\infty}$ with weakly convergent finite-dimensional distributions.
\end{lemma}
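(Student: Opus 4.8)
The plan is to combine Prohorov's theorem with a diagonal extraction over a countable dense set of times, and then to upgrade convergence from rational times to all times by means of the uniform stochastic continuity \eqref{pusc}.

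First I would establish tightness of the finite-dimensional distributions. Fix an arbitrary tuple $(t_1,\dots,t_k)\subset\mathbb{R}^+$. The uniform boundedness in probability \eqref{pub} supplies, for each $\varepsilon>0$, a radius $M$ with $\sup_n\mathbb{P}\{|Y_n(t_i)|>M\}<\varepsilon/k$ for every $i$; a union bound then confines the vector $(Y_n(t_1),\dots,Y_n(t_k))$ to the compact cube $[-M,M]^{lk}$ with probability exceeding $1-\varepsilon$, uniformly in $n$. Hence the family $\{\mathrm{law}(Y_n(t_1),\dots,Y_n(t_k))\}_n\subset\mathcal{P}(\mathbb{R}^{lk})$ is tight, and by Prohorov's theorem it is relatively weakly compact.

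Second I would run a diagonal argument over a countable dense set. Let $D=\{s_1,s_2,\dots\}$ enumerate the nonnegative rationals. For the one-point tuple $(s_1)$ extract a subsequence along which $\mathrm{law}(Y_n(s_1))$ converges weakly; from it, for $(s_1,s_2)$ extract a further subsequence, and continue. The diagonal subsequence $\{Y_{n_k}\}$ has the property that the joint law at every finite sub-tuple of $D$ converges weakly, since any finite subset of $D$ sits inside some initial segment $\{s_1,\dots,s_m\}$.

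Third, and here the real work lies, I would extend weak convergence from rational tuples to arbitrary tuples using \eqref{pusc}. Metrize weak convergence on $\mathcal{P}(\mathbb{R}^{lk})$ by the (complete) bounded–Lipschitz metric $d_{BL}$, and recall the elementary estimate that $\mathbb{P}\{|U-V|>\delta\}\le\delta$ forces $d_{BL}(\mathrm{law}(U),\mathrm{law}(V))\le 3\delta$, since $|h(U)-h(V)|\le\min(|U-V|,2)$ whenever $\|h\|_{BL}\le 1$. Given $(t_1,\dots,t_k)$ and $\varepsilon>0$, uniform stochastic continuity provides $\Delta t$ so that $\sup_n\mathbb{P}\{|Y_n(t_i)-Y_n(s)|>\varepsilon\}$ is small for $|t_i-s|<\Delta t$; choosing rationals $s_i$ within $\Delta t$ of $t_i$ and summing over the $k$ coordinates keeps the joint laws at $(t_1,\dots,t_k)$ and at $(s_1,\dots,s_k)$ within a prescribed $d_{BL}$-distance, uniformly in $n$. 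An $\varepsilon/3$ argument, with the two approximation gaps controlled uniformly in $k$ by this estimate and the middle rational gap controlled by the second step, shows that $\{\mathrm{law}(Y_{n_k}(t_1),\dots,Y_{n_k}(t_k))\}_k$ is $d_{BL}$-Cauchy, hence convergent by completeness. I expect the principal obstacle to be exactly this passage from the countable to the uncountable index set: the diagonal procedure alone only handles rational times, and it is the uniform-in-$n$ stochastic continuity, funneled through the probability-to-$d_{BL}$ estimate, that must do the upgrading to all finite-dimensional distributions.
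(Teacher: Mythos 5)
Your argument is correct, and there is in fact no in-paper proof to compare it against: Lemma \ref{subconv} is stated in Section \ref{pre} as one of ``two basic results which were given by Skorokhod \cite{Sav}'' and is simply cited, not proved. Your route --- tightness of each fixed finite-dimensional law from \eqref{pub} plus Prohorov, a diagonal extraction over the rationals, and the upgrade to arbitrary tuples via \eqref{pusc} funneled through the estimate $\mathbb{P}\{|U-V|>\delta\}\le\delta\Rightarrow d_{BL}(\mathrm{law}(U),\mathrm{law}(V))\le 3\delta$ --- is precisely the classical argument behind Skorokhod's theorem, and every step checks out (for the final convergence you may either invoke completeness of $d_{BL}$ on $\mathcal{P}(\mathbb{R}^{lk})$ or, more economically, combine the Cauchy property with the tightness already established in your first step).
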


We also need a local version of Theorem 3.1 in \cite{LW}, which is stated as a following theorem.
%\begin{theorem}\label{tlw}
%Consider the following stochastic differential equations on $\mathbb{R}^d$
%\[
%dX_n=F_n(t,X_n)dt+G_n(t,X_n)dB,~ n=1,2,\ldots,
%\]
%where $F_n$ are $\mathbb{R}^d$-valued, $G_n$ are $(d\times l)$-matrix-valued and $B$ is a standard $l$-dimensional Brownian motion. Assume that conditions {\bf (H1)}-{\bf (H2)} hold with common $K$ and $L$ and $F_n\rightarrow F$, $G_n\rightarrow G$ pointwise on $\mathbb{R}\times\mathbb{R}^d$ as $n\rightarrow\infty$. Moreover, assume that there is a common $r_0>0$ such that for all $n=1,2,\ldots$, $\|X_n\|_{2,\mathbb{R}}\leq r_0$. Then there exists a subsequence of $\{X_n\}$ (still denoted by $\{X_n\}$) such that \[
%X_n\convid X
%\]
%uniformly on any compact intervals in $\mathbb{R}$. Here $X$ is a solution of the following stochastic differential equation
%\[
%dX=F(t,X)dt+G(t,X)d\tilde{B},
%\]
%satisfying $\|X\|_{2,\mathbb{R}}\leq r_0$, where $\tilde{B}$ is another $l$-dimensional Brownian motion.
%\end{theorem}

%To obtain the existence of $(Q,T)$-affine periodic solution, we actually use the corollary below.

\begin{theorem}\label{cor}
Consider the following stochastic differential equations
\[
dX_{\xi_n}=f(t,X_{\xi_n})dt+g(t,X_{\xi_n})dW_n,
\]
where $f,~g$ are as outlined above satisfying linear growth condition and global Lipschitz condition. $\{W_n\}$ are identical distributed Brownian motions. ${\xi_n}$ is a sequence of random variables in $L^2(\mathbb{P},\mathbb{R}^l)$. Moreover, assume that there is a common $r_0>0$ such that for all $n=1,2,\ldots$, $\|X_{\xi_n}\|_{2,[0,T]}\leq r_0$, where $\|\cdot\|_{2,I}$ is defined as above. Then there exists a subsequence of $\{X_{\xi_n}\}$ (still denoted by $\{X_{\xi_n}\}$) such that
\[
X_{\xi_n}\convid X
\]
uniformly on $[0,T]$. Further, there is a Brownian motion $W$ identical distributed with $\{W_n\}$ such that $(X,W)$ is a solution of \eqref{sde} satisfying $\|X\|_{2,[0,T]}\leq r_0$.
\end{theorem}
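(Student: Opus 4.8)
The plan is to realize this as a tightness-plus-Skorokhod-representation argument, following the strategy of Theorem 3.1 in \cite{LW} but carefully tracking the local mean square bound. The two Skorokhod results, Lemma \ref{subconv} and Lemma \ref{npslimit}, reduce the whole statement to two tasks: first, verifying that $\{X_{\xi_n}\}$ is uniformly stochastically continuous and uniformly bounded in probability, so that a subsequence with weakly convergent finite-dimensional distributions exists and can be realized as an almost surely convergent sequence on an auxiliary space; and second, identifying the limit process as a genuine solution of \eqref{sde} driven by a Brownian motion with the correct law.

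First I would check the two hypotheses \eqref{pub} and \eqref{pusc}. Uniform boundedness in probability is immediate from Chebyshev's inequality together with the standing assumption $\|X_{\xi_n}\|_{2,[0,T]}\leq r_0$: for every $t\in[0,T]$ and every $n$,
\[
\mathbb{P}\{|X_{\xi_n}(t)|>M\}\leq\frac{\mathbb{E}|X_{\xi_n}(t)|^2}{M^2}\leq\frac{r_0^2}{M^2}\to0\quad(M\to\infty).
\]
For uniform stochastic continuity I would estimate $\mathbb{E}|X_{\xi_n}(s_1)-X_{\xi_n}(s_2)|^2$ through the integral form of the equation. Splitting into the drift and diffusion contributions, applying Cauchy--Schwarz to the Lebesgue integral and the It\^o isometry to the stochastic integral, and then invoking the linear growth condition together with the bound $r_0$, one obtains $\mathbb{E}|X_{\xi_n}(s_1)-X_{\xi_n}(s_2)|^2\leq C|s_1-s_2|$ with $C$ depending only on $r_0$, $T$ and the growth constant, uniformly in $n$; Chebyshev's inequality then yields \eqref{pusc}.

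With both hypotheses in hand, Lemma \ref{subconv} produces a subsequence (still written $\{X_{\xi_n}\}$) whose finite-dimensional distributions converge weakly, and Lemma \ref{npslimit} constructs, on a probability space $(\tilde\Omega,\tilde{\mathcal F},\tilde{\mathbb P})$, processes $\tilde X_{\xi_n}$ having the same finite-dimensional distributions as $X_{\xi_n}$ and satisfying $\tilde X_{\xi_n}\convid\tilde X$. To preserve the equation structure I would apply these results to the joint processes $(X_{\xi_n},W_n)$, so that the limiting driving noise $\tilde W$ is obtained together with $\tilde X$ and inherits the Brownian law from $\{W_n\}$. It then remains to pass to the limit in
\[
X_{\xi_n}(t)=\xi_n+\int_0^tf(s,X_{\xi_n}(s))ds+\int_0^tg(s,X_{\xi_n}(s))dW_n(s)
\]
to show that $(\tilde X,\tilde W)$ solves \eqref{sde}. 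This identification is the main obstacle: the drift term converges by the continuity of $f$ and the almost sure path convergence supplied by Lemma \ref{npslimit}, but the stochastic integral term is delicate, since both the integrand (through $\tilde X_{\xi_n}$) and the integrator $\tilde W_n$ vary with $n$. I would control it using the global Lipschitz property of $g$, the moment bound $r_0$, and the almost sure path convergence, arguing as in \cite{LW} that $\int_0^tg(s,\tilde X_{\xi_n}(s))d\tilde W_n(s)\to\int_0^tg(s,\tilde X(s))d\tilde W(s)$ in the appropriate sense.

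Finally, the uniform convergence on $[0,T]$ follows by combining the finite-dimensional convergence with the equicontinuity encoded in the estimate $\mathbb{E}|X_{\xi_n}(s_1)-X_{\xi_n}(s_2)|^2\leq C|s_1-s_2|$, through the Prohorov and Arzela--Ascoli arguments already indicated in the introduction. The surviving mean square bound $\|X\|_{2,[0,T]}\leq r_0$ is then a consequence of the lower semicontinuity of the $L^2$ norm under weak convergence: for each $t$ one has $\mathbb{E}|X(t)|^2\leq\liminf_n\mathbb{E}|X_{\xi_n}(t)|^2\leq r_0^2$, whence the maximum over $[0,T]$ does not exceed $r_0$.
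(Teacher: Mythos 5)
Your proposal is correct in substance, but it takes a different (and much more self-contained) route than the paper. The paper's own proof is essentially two lines: since {\bf (H1)}--{\bf (H2)} give strong well-posedness and hence weak uniqueness, each $X_{\xi_n}$ has the same law as the solution $Y_n$ of the same equation driven by a \emph{single fixed} Brownian motion $B$ with $Y_n(0)=\xi_n$; the statement is then obtained by citing the proof of Theorem 3.1 of \cite{LW} restricted to $[0,T]$. You instead keep the varying drivers $W_n$ and reconstruct the cited argument from scratch: verifying \eqref{pub} by Chebyshev and \eqref{pusc} by the standard second-moment increment estimate, applying Lemmas \ref{subconv} and \ref{npslimit} to the joint processes $(X_{\xi_n},W_n)$, identifying the limit of the integral equation, and recovering uniformity and the $L^2$ bound by Prohorov/Arzela--Ascoli and lower semicontinuity. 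What the paper's reduction buys is brevity (the whole burden is shifted to \cite{LW}); what your version buys is an explicit record of exactly where the hypotheses $\|X_{\xi_n}\|_{2,[0,T]}\leq r_0$ and the linear growth condition enter, which is the actual content of the ``local version.''

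Two small cautions. First, Lemma \ref{npslimit} as stated in the paper only asserts $\tilde Y_n\convid\tilde Y$; your identification of the limit equation uses almost sure (or in-probability) pathwise convergence on the auxiliary space. That stronger conclusion is what Skorokhod's representation theorem actually delivers and is how \cite{LW} uses it, but it is not literally what the lemma as written gives, so you should either invoke the full form of Skorokhod's theorem or note that convergence in probability at each $t$ holds for the constructed $\tilde Y_n$. Second, with the paper's convention $\|X\|_{2,I}=\max_{t\in I}\mathbb{E}|X(t)|^2$ (no square root), your Chebyshev bound should read $r_0/M^2$ rather than $r_0^2/M^2$, and likewise $\mathbb{E}|X(t)|^2\leq\liminf_n\mathbb{E}|X_{\xi_n}(t)|^2\leq r_0$ at the end; this is purely notational.
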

\begin{proof}
Linear growth condition and global Lipschitz condition lead to the well-posedness of the strong solution for \eqref{sde}, which implies the weak uniqueness. Thus, $X_{\xi_n}$ shares the same distribution with stochastic process $Y_n$, which is the solution of the following equation
\[
\left\{
\begin{split}
&dY_n=f(t,Y_n)dt+g(t,Y_n)dB,\\
&Y_n(0)=\xi_n,~a.s.,
\end{split}
\right.
\]
where $B$ is identical distributed with $\{W_n\}$.

Hence, the result is obtained by following the proof of Theorem 3.1 on $[0,T]$ in \cite{LW}.
\end{proof}

Finally, a Gronwall-type inequality is cited below which is useful in our example.

\begin{lemma}\label{gron}
{\bf(Theorem 11 (i) in \cite{DS})} Suppose $x$ and $K$ are continuous. Functions $A$ and $B$ are Riemann integrable functions on $[t_1,t_2]$ with $B$ and $K$ nonnegative. If
\[
x(t)\leq A(t)+B(t)\int^t_{t_1}K(s)x(s)ds,~t\in[t_1,t_2],
\]
then
\[
x(t)\leq A(t)+B(t)\int^t_{t_1}A(s)K(s)\exp\left(\int^t_sB(r)K(r)dr\right)ds,~t\in[t_1,t_2]
\]
\end{lemma}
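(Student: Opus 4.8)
The plan is to reduce the integral inequality to a linear first-order differential inequality for the auxiliary function
\[
u(t)=\int_{t_1}^t K(s)x(s)\,ds,
\]
and then to solve it by the integrating-factor method. First I would record that, since $K$ and $x$ are continuous, $u$ is continuously differentiable on $[t_1,t_2]$ with $u'(t)=K(t)x(t)$ and $u(t_1)=0$. Inserting the hypothesis $x(t)\le A(t)+B(t)u(t)$ and using $K\ge 0$ gives the pointwise inequality
\[
u'(t)=K(t)x(t)\le K(t)A(t)+K(t)B(t)u(t),\qquad t\in[t_1,t_2].
\]

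Next I would introduce the integrating factor
\[
V(t)=\exp\left(-\int_{t_1}^t K(r)B(r)\,dr\right).
\]
Because $K$ is continuous and $B$ is Riemann integrable and nonnegative, the product $KB$ is bounded and Riemann integrable, so its indefinite integral is Lipschitz and $V$ is absolutely continuous with $V'(t)=-K(t)B(t)V(t)$ for almost every $t$. Forming the product $uV$, which is absolutely continuous since $u$ is $C^1$ and $V$ is Lipschitz, and differentiating almost everywhere yields
\[
(uV)'(t)=K(t)V(t)\big(x(t)-B(t)u(t)\big)\le K(t)V(t)A(t)
\]
for a.e.\ $t$, where the last step uses $x(t)-B(t)u(t)\le A(t)$ together with $K(t)V(t)\ge 0$.

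Integrating this from $t_1$ to $t$ and using $u(t_1)V(t_1)=0$ gives
\[
u(t)V(t)\le\int_{t_1}^t K(s)A(s)V(s)\,ds,
\]
and dividing by $V(t)>0$ together with the identity $V(s)/V(t)=\exp\left(\int_s^t K(r)B(r)\,dr\right)$ produces
\[
u(t)\le\int_{t_1}^t A(s)K(s)\exp\left(\int_s^t B(r)K(r)\,dr\right)ds.
\]
Substituting this bound for $u(t)$ back into $x(t)\le A(t)+B(t)u(t)$ then yields exactly the asserted conclusion.

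The integrating-factor computation itself is routine; the one point that needs care is the limited regularity of $A$ and $B$. Because these are merely Riemann integrable rather than continuous, the manipulation of $V$ and the product rule must be justified at the level of absolutely continuous functions and almost-everywhere derivatives rather than by a naive $C^1$ argument, and one must also verify that $\int_{t_1}^t K(r)B(r)\,dr$ is well defined and finite so that $V$ stays bounded away from $0$ on the compact interval. These are precisely the stated hypotheses ($x,K$ continuous, $A,B$ Riemann integrable, $B,K\ge 0$) that make the argument valid.
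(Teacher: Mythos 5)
The paper does not prove this lemma at all---it is quoted verbatim as Theorem 11(i) from the cited monograph of Dragomir---so there is no in-paper argument to compare against. Your integrating-factor proof via $u(t)=\int_{t_1}^t K(s)x(s)\,ds$ is the standard derivation of this Gronwall variant and is correct, including the care you take with the merely Riemann-integrable $A$ and $B$ (working with absolutely continuous functions and a.e.\ derivatives, and using $B\ge 0$ in the final substitution).
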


\section{A law of large numbers and LaSalle principle for $(Q,T)$-affine periodic solutions}\label{exi}

\subsection{Hypotheses}

For stochastic differential equation \eqref{sde} and the diffusion process $X_\xi$ satisfying this equation, some reasonable assumptions are stated below.
\begin{itemize}
\item[{\bf (H1)}]The drift coefficient $f$ and diffusion coefficient $g$ satisfy the linear growth condition. Namely, there exists a positive constant $G$ such that
    \begin{equation}\label{linearg}
    |f(t,x)|+|g(t,x)|\leq G(1+|x|)
    \end{equation}
    for all $(t,x)\in\mathbb{R}^+\times\mathbb{R}^l$.
\item[{\bf (H2)}]The drift coefficient $f$ and diffusion coefficient $g$ satisfy the $Lipschitz$ condition
    \begin{equation}\label{Lip}
    |f(t,x_1)-f(t,x_2)|\vee|g(t,x_1)-g(t,x_2)|\leq L|x_1-x_2|,
    \end{equation}
    where $L>0$ is a constant, $t\geq 0,~x_i\in\mathbb{R}^l,~i=1,~2.$
\item[{\bf (H3)}]$X_\xi$ satisfies a $(Q,T)$-affine periodic boundedness condition
    \begin{equation}\label{adbc}
    \mathbb{E}|Q^{-n}X_\xi(nT)|^2\leq C
    \end{equation}
    for all $n\geq 0$, where $C$ is a positive constant independent of $n$.
%\item[{\bf (H4)}]$X$ satisfies an $T$-initial state dependence condition
%    \begin{equation}\label{isdc}
%    \|X\|_{2,[0,T]}\leq C_2\mathbb{E}|X(0)|^2,
%    \end{equation}
%    where $C_2$ is a positive constant.
\item[{\bf (H4)}]For the stochastic processes $\{Q^{-n}X_\xi\}_{n=0}^{\infty}$, the probability density $p_{Q^{-n}X_\xi(\cdot)}$ with respect to each $Q^{-n}X$ for $n=1,2,\ldots$ satisfies condition
\begin{equation}\label{waHc}
\lim_{k\rightarrow\infty}\frac{1}{n_k+1}\sum_{N=0}^{n_k}d_{BL}\Big(p_{Q^{-N-1}X_\xi(NT+T)},p_{Q^{-N}X_\xi(NT)}\Big)=0,
\end{equation}
where $\{n_k\}$ is a sequence of integers tending to $+\infty$.
\end{itemize}

\begin{remark}
Conditions {\bf (H1)-(H2)} ensure the existence and uniqueness of strong solutions for \eqref{sde} (for more details see Section 5.2-5.3 in \cite{O}).
\end{remark}
\begin{remark}
The dominated boundedness Condition (4) in \cite{CH} that there exists an $L^2$ function $\tilde{L}$ such that 
\begin{equation}\label{db}
|X_\xi(nT)|\leq \tilde{L},~\mathbb{E}|\tilde{L}|^2\leq\infty~\forall n\geq 0
\end{equation}
for $Q=I_l$ is changed to {\bf(H3)}, which is much weaker. In fact, \eqref{db} is too strong to be within reach for most solutions of \eqref{sde}.
\end{remark}
\begin{remark}
{\bf(H4)} is a weak stochastic affine-periodic version of Halanay's criterion. Note that when $g\equiv 0$, \eqref{waHc} becomes
\[
\lim_{n\rightarrow\infty}\frac{1}{n_k+1}\sum^{n_k}_{N=0}\left|Q^{-N-1}X((N+1)T)-Q^{-N}X(NT)\right|=0.
\]
Our result below show that under this affine-periodic averaging condition, the deterministic affine-periodic system admits an affine-periodic solution. \eqref{waHc} can also be regarded as a law of large numbers with geometry structure.
\end{remark}

\begin{remark}\label{h42}
Condition \eqref{waHc} seems difficult to be verified. Since convergence in mean square implies convergence in distribution, we provide a following stronger condition, which can be regarded as a Bernoulli type law of large numbers on stochastic periodicity. 
\begin{itemize}
\item[{\bf(H4)'}]Stochastic processes $\{Q^{-n}X_\xi\}_{n=0}^\infty$ satisfy
\begin{equation}\label{waHc2}
\lim_{k\rightarrow\infty}\frac{1}{n_k+1}\sum^{n_k}_{N=0}\left\|Q^{-N-1}X_{\xi}(NT+T)-Q^{-N}X_\xi(NT)\right\|_2^2=0,
\end{equation}
where $\{n_k\}$ are as above.
\end{itemize}
\end{remark}

\subsection{Existence of stochastic affine-periodic solutions}

Now we can state our first result which is a criterion on the existence of $(Q,T)$-affine periodic solutions in distribution for stochastic differential equation \eqref{sde}. 

\begin{theorem}\label{t1}
Assume that $f$ and $g$ are continuous and $(Q,T)$-affine periodic funtions satisfying assumptions {\bf (H1)}-{\bf (H4)}. Then there exists an $L^2$-bounded $(Q,T)$-affine periodic solution in distribution of \eqref{sde}.
\end{theorem}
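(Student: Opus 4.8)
The plan is to construct a candidate affine periodic solution as a limit of averaged Ces\`aro-type initial states and then verify the two defining conditions {\bf (C1)} and {\bf (C2)} of Definition \ref{def_QTAPs}. First I would set up a Poincar\'e-type map at the level of distributions. For an initial state $\xi$, consider the sequence of random variables $\zeta_N = Q^{-N}X_\xi(NT)$, whose laws $\mu_N := p_{Q^{-N}X_\xi(NT)}$ live in $\mathcal{P}^2(\mathbb{R}^l)$. The $(Q,T)$-affine periodicity \eqref{eap} of $f,g$ is exactly what makes the one-step transition $\mu_N \mapsto \mu_{N+1}$ time-homogeneous: if $X$ solves \eqref{sde} then $Q^{-1}X(\cdot+T)$ solves the same equation driven by a Brownian motion identical in law to $W$, so advancing by one period and applying $Q^{-1}$ is a fixed (autonomous) map on laws. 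The goal is to produce a law that is fixed under this map, which will give the affine periodicity.

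Next I would exploit the hypotheses to extract a convergent Ces\`aro average. Form the averaged measures $\nu_{n_k} = \frac{1}{n_k+1}\sum_{N=0}^{n_k}\mu_N$ along the sequence $\{n_k\}$ from {\bf (H4)}. The boundedness condition {\bf (H3)}, $\mathbb{E}|Q^{-N}X_\xi(NT)|^2 \leq C$, gives a uniform second-moment bound on $\{\zeta_N\}$, hence on the averages; by Chebyshev this yields tightness, so by Prohorov's theorem a subsequence of $\{\nu_{n_k}\}$ converges weakly to some $\nu_\infty$. I would then choose a random variable $\xi^*$ with law $\nu_\infty$ and take $X_{\xi^*}$ as the candidate solution. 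The role of {\bf (H4)} (or the stronger {\bf (H4)'} via $d_{BL}$ being dominated by the $L^2$ distance) is to force the telescoping defect to vanish on average: the averaged one-step displacement $\frac{1}{n_k+1}\sum_N d_{BL}(\mu_{N+1},\mu_N)\to 0$ means the push-forward of $\nu_\infty$ under the period map and $\nu_\infty$ itself have $d_{BL}$-distance zero, i.e. $\nu_\infty$ is a fixed point. Concretely, one tests against Lipschitz-bounded $h$, writes $\int h\, d(\text{image}-\nu_{n_k})$ as a telescoping sum over $N$ of increments bounded by $d_{BL}(\mu_{N+1},\mu_N)$, and lets $k\to\infty$.

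To make the limiting object an actual solution rather than merely a fixed law, I would invoke Theorem \ref{cor}: the approximating solutions started from truncated/averaged data are uniformly $L^2$-bounded on $[0,T]$ by {\bf (H3)} and the linear growth {\bf (H1)}, so along a subsequence $X_{\xi_n}\convid X$ uniformly on $[0,T]$, and the limit $(X,W)$ genuinely solves \eqref{sde} with $\|X\|_{2,[0,T]}\leq r_0$, giving the claimed $L^2$-boundedness. Finally I would verify {\bf (C1)}: the fixed-point property $Q^{-1}p_{X_{\xi^*}(T)} = p_{\xi^*}$ in distribution, combined with time-homogeneity of the period map and weak uniqueness under {\bf (H1)}--{\bf (H2)}, propagates to all $t$, yielding $p_{X_{\xi^*}(t+T)}(x) = p_{X_{\xi^*}(t)}(Q^{-1}x)$ for every $t$, which is \eqref{pqtd}. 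Condition {\bf (C2)} follows from the weak-uniqueness construction, which supplies a Brownian motion $\tilde W$ equal in law to $W$ under which $Q^{-1}X_{\xi^*}(\cdot+T)$ solves \eqref{sde1t}.

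The main obstacle I anticipate is the passage from the averaged fixed-point property to genuine affine periodicity of a bona fide solution: the Ces\`aro average only furnishes an invariant law, and one must show the convergence of the processes (not just the time-$NT$ marginals) is strong enough, and uniform enough on $[0,T]$, to identify the limit as a solution whose \emph{entire} distributional trajectory is $(Q,T)$-affine periodic. This is precisely where the uniform stochastic continuity and the local uniform-convergence Theorem \ref{cor} must be combined carefully, and where weak uniqueness is needed to rule out the limit law being carried by a spurious process. Controlling tightness of the path measures (via {\bf (H1)} and {\bf (H3)}) so that Lemmas \ref{npslimit}--\ref{subconv} and Theorem \ref{cor} apply on $[0,T]$ is the technical heart of the argument.
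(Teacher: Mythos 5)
Your proposal follows essentially the same route as the paper: the paper realizes your Ces\`aro-averaged measures $\nu_{n_k}$ concretely as the laws of $Y_k(0)=Q^{-\xi_k}X_\eta(\xi_kT)$ with a uniformly distributed random index $\xi_k$, extracts a weak limit via tightness from {\bf (H3)} (Skorokhod's lemmas in place of your direct appeal to Prohorov), upgrades to a genuine weak solution on $[0,T]$ via Theorem \ref{cor}, and uses the telescoping sum with {\bf (H4)} to get $Q^{-1}Z(T)\equivid Z(0)$, exactly as you outline. The technical heart you identify --- that the uniform-on-$[0,T]$ convergence in distribution from Theorem \ref{cor} is what lets the fixed-point property pass to the limit and identifies the limit as a solution --- is precisely how the paper closes the argument.
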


\begin{proof}
The well-posedness of \eqref{sde} is insured by {\bf(H1)-(H2)}. Let $X_\eta$ be the solution satisfying the assumptions above. We tend to construct a $(Q,T)$-affine periodic solution in distribution of \eqref{sde}.

{\bf Step 1: Construct possible initial random variable.}

Given a random variable $\xi_k$ independent of $W$ and $\eta$ such that
\[
\mathbb{P}\{\xi_k=N\}=\frac{1}{k+1},~N=0,~1,\ldots,~k,
\]
for each $k\in\mathbb{Z}^+$, we define a sequence of stochastic processes
\[
\left\{
\begin{split}
&Y_k(t)=Q^{-\xi_k}X_\eta(t+\xi_kT),\\
&Y_k(0)=Q^{-\xi_k}X_\eta(\xi_kT).
\end{split}
\right.
\]

For $\xi_k=N$, $Y_k$ is a solution of stochastic differential function
\begin{equation}\label{sdent}
dY(t)=f(t,Y(t))dt+g(t,Y(t))dW_N(t),
\end{equation}
where the law of stochastic process
\[
W_N(t)=W(t+NT)-W(NT),
\]
coincides with the law of $W(t)$. Moreover, one can verify that
\begin{equation}\label{rvBW}
W_{\xi_k}\equivid W,
\end{equation}
by replacing $N$ to $\xi_k$ for any fixed $k=0,~1,~\ldots$. Namely, $\{W_{\xi_k}\}$ are different Brownian motions in $(\Omega,\mathcal{F},\mathbb{P})$ and $\{(Y_k,W_{\xi_k})\}$ are solutions of \eqref{sde}.
%%Particularly, Condition {\bf (C2)} in Definition \ref{def_QTAPs} holds with $N=1$.

Because of the independence between $\xi_k$ and $W$, we have
\begin{equation}\label{indpxw}
\mathbb{P}\{Y_k(t)\in A\}=\frac{1}{k+1}\sum^k_{N=0}\mathbb{P}\{Q^{-N}X_\eta(t+NT)\in A\}
\end{equation}
for any Borel set $A\subset\mathbb{R}^l$. Thus by Chebyshev's inequality and affine-periodic boundedness condition {\bf (H3)}, we have
\[
\begin{split}
\mathbb{P}\{|Y_k(0)|>M\}&=\frac{1}{k+1}\sum^k_{N=0}\mathbb{P}\{|Q^{-N}X_\eta(NT)|>M\}\\
                        &\leq\frac{1}{k+1}\sum^k_{N=0}\frac{\mathbb{E}|Q^{-N}X_\eta(NT)|^2}{M^2}\\
                        &<\frac{C}{M^2}\rightarrow 0,~M\rightarrow\infty.
\end{split}
\]

Taking $Y_k(0)$ as a stochastic process (constant in time), \eqref{pub} is fulfilled with the estimation above and \eqref{pusc} is trivial. By Lemmas \ref{npslimit} and \ref{subconv}, there is another probability space $(\tilde{\Omega},\tilde{\mathcal{F}},\tilde{\mathbb{P}})$ such that
\begin{itemize}
\item there exists a sequence $\{\tilde{Y}_{k,0}\}$ in this space satisfying $\tilde{Y}_{k,0}\equivid Y_k(0)$ with respect to each $k=0,~1,~\ldots$.
\item This new sequence contains a subsequence $\{\tilde{Y}_{n_k,0}\}$ satisfying $\tilde{Y}_{n_k,0}\convid\tilde{Y}_0$ as $k\rightarrow\infty$, where $\tilde{Y}_0$ is a random variable in $(\tilde{\Omega},\tilde{\mathcal{F}},\tilde{\mathbb{P}})$.
\end{itemize}
Thus,
\begin{equation}\label{initst}
\mathbb{E}|Y_{n_k,0}|^2=\tilde{\mathbb{E}}|\tilde{Y}_{n_k,0}|^2\leq C.
\end{equation}

{\bf Step 2: Verify that there is a subsequence of $Y_{n_k}$ convergent in distribution on $[0,T]$.}

By Cauchy-Schwartz inequality, It\^o isometry and linear growth condition \eqref{linearg}, we have for all $k\in\mathbb{N}^+$,
\begin{align*}
\mathbb{E}|Y_{n_k}(t)|^2=&\mathbb{E}\left|Y_{n_k}(0)+\int^t_0f(s,Y_{n_k}(s))ds+\int^t_0g(s,Y_{n_k}(s))dW_{\xi_{n_k}}(s)\right|^2\\
                         \leq&3\mathbb{E}|Y_{n_k}(0)|^2+3\mathbb{E}\left|\int^t_0f(s,Y_{n_k}(s))ds\right|^2\\
&+3\mathbb{E}\left|\int^t_0g(s,Y_{n_k}(s))dW_{\xi_{n_k}}(s)\right|^2\\
\leq&3\mathbb{E}|Y_{n_k}(0)|^2+3t\mathbb{E}\int^t_0|f(s,Y_{n_k}(s))|^2ds\\
&+3\mathbb{E}\int^t_0|g(s,Y_{n_k}(s))|^2ds\\
\leq&3\mathbb{E}|Y_{n_k}(0)|^2+6(t+1)tG^2+6(T+1)G^2\int^t_0\mathbb{E}|Y_{n_k}(s)|^2ds.
\end{align*}
Hence, by Gronwall's inequality and {\bf (H3)}, it holds that
\[
\begin{split}
\mathbb{E}|Y_{n_k}(t)|^2\leq&3e^{6(T+1)tG^2}(\mathbb{E}|Y_{n_k}(0)|^2+2(t+1)tG^2)\\
\leq&3e^{6(T+1)TG^2}(C+2(T+1)TG^2),
\end{split}
\]
uniformly with respect to $k$ and on $[0,T]$. That is, there exists a positive constant $r_0=3e^{6(T+1)TG^2}(C+2(T+1)TG^2)$ independent with $k$ such that
\[
\|Y_{n_k}\|_{2,[0,T]}\leq r_0.
\]

By Theorem \ref{cor}, this at once leads to that there is a subsequence of $\{Y_{n_k}\}$ (still denoted by $\{Y_{n_k}\}$) and a weak solution $(Z,W)$ of \eqref{sde} such that
\[
Y_{n_k}\convid Z,
\]
uniformly on $[0,T]$. Moreover,
\[
\|Z\|_{2,[0,T]}\leq r_0.
\]

{\bf Step 3: Show that $Z$ is the desired solution.}

To show that $Z$ is a $(Q,T)$-affine periodic solution in distribution of \eqref{sde}, we only need to verify that
\[
Q^{-1}Z(T)\equivid Z(0).
\]
By the definitions of $\{Y_k\}_{k=0}^{\infty}$ and assumption {\bf (H4)}, we have
\begin{align*}
d_{BL}&\left(p_{Q^{-1}Z(T)},p_{Z(0)}\right)\\
&=d_{BL}\left(\tilde{p}_{Q^{-1}\tilde{Y}_{0_1}(T)},\tilde{p}_{\tilde{Y}_{0_1}(0)}\right)\\
                                                      &=\lim_{k\rightarrow\infty}d_{BL}\left(\tilde{p}_{Q^{-1}\tilde{Y}_{n_k}(T)},\tilde{p}_{\tilde{Y}_{n_k}(0)}\right)\\
                                                      &=\lim_{k\rightarrow\infty}d_{BL}\left(p_{Q^{-1}Y_{n_k}(T)},p_{Y_{n_k}(0)}\right)\\
                                                      &=\lim_{k\rightarrow\infty}\sup_{\|\phi\|_{BL}\leq 1}\left|\int\phi dp_{Q^{-1}Y_{n_k}(T)}-\int\phi dp_{Y_{n_k}(0)}\right|\\
                                                      &=\lim_{k\rightarrow\infty}\sup_{\|\phi\|_{BL}\leq 1}\left|\int_{\Omega}\left(\phi\left(Q^{-1}Y_{n_k}(T)\right)-\phi\left(Y_{n_k}(0)\right)\right)d\mathbb{P}\right|\\
                                                      &=\lim_{k\rightarrow\infty}\sup_{\|\phi\|_{BL}\leq 1}\Bigg|\int_{\Omega}\Big(\phi\left(Q^{-\xi_{n_k}-1}X_\eta((\xi_{n_k}+1)T)\right)\\
                                                                                 &~~~~~~~~~~~~~~~~~~~~~~~~~~~~-\phi\left(Q^{-\xi_{n_k}}X_\eta(\xi_{n_k}T)\right)\Big)d\mathbb{P}\Bigg|
                                                    \\
                                                      &=\lim_{k\rightarrow\infty}\sup_{\|\phi\|_{BL}\leq 1}\Bigg|\frac{1}{n_k+1}\sum^{n_k}_{N=0}\int_{\Omega}\Big(\phi\left(Q^{-N-1}X_\eta((N+1)T)\right)\\
                                                                                                                &~~~~~~~~~~~~~~~~~~~~~~~~~~~~~~~~~~~~~~~~~~-\phi\left(Q^{-N}X_\eta(NT)\right)\Big)d\mathbb{P}\Bigg|
                                                       \\
                                                      &\leq\lim_{k\rightarrow\infty}\frac{1}{n_k+1}\sum^{n_k}_{N=0}d_{BL}\left(p_{Q^{-N-1}X_\eta(NT+T)},p_{Q^{-N}X_\eta(NT)}\right)\\
                                                      &=0.
\end{align*}
This leads to that $Q^{-1}Z(T)\equivid Z(0)$, which means that the condition {\bf (C1)} in Definition \ref{def_QTAPs} holds.

Furthermore, $(Q^{-1}Z(\cdot+T),\tilde{W})$ is also a solution of \eqref{sde} with $\tilde{W}=W_1,~a.e.$ by the $(Q,T)$-affine periodicity of $f$ and $g$. This leads to the condition {\bf (C2)} in Definition \ref{def_QTAPs}.

Then $Z$ is a $(Q,T)$-affine periodic solution in distribution of equation \eqref{sde}. The proof is completed.
\end{proof}

Changing Condition {\bf(H4)} to {\bf(H4)'}, we also have a result below. Since convergence in mean square implies convergence in distribution, the proof is omitted.
\begin{cor}\label{cort1}
Assume that $f$ and $g$ are continuous and $(Q,T)$-affine periodic functions satisfying assumptions {\bf(H1)-(H3)} and {\bf(H4)'}. Then there exists an $L^2$-bounded $(Q,T)$-affine periodic solution in distribution of \eqref{sde}.
\end{cor}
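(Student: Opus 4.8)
The plan is to show that hypothesis \textbf{(H4)'} implies hypothesis \textbf{(H4)}, after which the conclusion follows immediately by applying Theorem \ref{t1} to the solution $X_\xi$. Thus the whole task reduces to a single deterministic inequality relating the averaged bounded-Lipschitz distances in \eqref{waHc} to the averaged mean-square distances in \eqref{waHc2}. This is the precise content of the remark that mean-square convergence implies convergence in distribution.

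First I would fix $N$ and abbreviate $U_N=Q^{-N-1}X_\xi(NT+T)$ and $V_N=Q^{-N}X_\xi(NT)$. For any test function $\phi$ with $\|\phi\|_{BL}\le 1$ we have in particular $\|\phi\|_L\le 1$, so $\phi$ is $1$-Lipschitz; rewriting the integrals against the densities as expectations and using Jensen's (equivalently Cauchy--Schwarz) inequality gives
\[
\left|\int\phi\,dp_{U_N}-\int\phi\,dp_{V_N}\right|=\big|\mathbb{E}\big[\phi(U_N)-\phi(V_N)\big]\big|\le\mathbb{E}|U_N-V_N|\le\|U_N-V_N\|_2.
\]
Taking the supremum over all $\phi$ with $\|\phi\|_{BL}\le 1$ yields the pointwise comparison
\[
d_{BL}\big(p_{U_N},p_{V_N}\big)\le\|U_N-V_N\|_2 ,
\]
which is exactly the metric form of ``mean-square closeness dominates distributional closeness'' for each index $N$.

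Next I would average over $N$ and invoke the discrete Cauchy--Schwarz inequality (equivalently, that the arithmetic mean is bounded by the root mean square) to pass from first powers to squares:
\[
\frac{1}{n_k+1}\sum_{N=0}^{n_k}d_{BL}\big(p_{U_N},p_{V_N}\big)\le\frac{1}{n_k+1}\sum_{N=0}^{n_k}\|U_N-V_N\|_2\le\left(\frac{1}{n_k+1}\sum_{N=0}^{n_k}\|U_N-V_N\|_2^2\right)^{1/2}.
\]
By \textbf{(H4)'}, i.e. \eqref{waHc2}, the right-hand side tends to $0$ as $k\to\infty$, hence so does the left-hand side, which is precisely \eqref{waHc}. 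Therefore \textbf{(H4)} holds, and Theorem \ref{t1} delivers the $L^2$-bounded $(Q,T)$-affine periodic solution in distribution of \eqref{sde}.

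I do not expect a genuine obstacle here, since Theorem \ref{t1} carries all the analytic weight (the Skorokhod-type construction, the uniform mean-square bound from Step~2, and the convergence in Theorem \ref{cor}). The only point requiring care is the clean chain of two elementary estimates above: the pointwise bound $d_{BL}(p_{U_N},p_{V_N})\le\|U_N-V_N\|_2$, which hinges on the observation that unit-$\|\cdot\|_{BL}$ test functions are $1$-Lipschitz, and the averaging via Cauchy--Schwarz that converts the $L^2$-norm sum into the root-mean-square of squared $L^2$-norms matching the exponent in \eqref{waHc2}. This is exactly why the author can assert that the proof is omitted.
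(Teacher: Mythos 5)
Your proposal is correct and follows exactly the route the paper intends: the paper omits the proof with the one-line remark that mean-square convergence implies convergence in distribution, and your argument simply supplies the details of that reduction, namely the pointwise bound $d_{BL}(p_{U_N},p_{V_N})\le\|U_N-V_N\|_2$ via $1$-Lipschitz test functions and Jensen, followed by Cauchy--Schwarz on the averages to match the squared exponent in \eqref{waHc2}, after which Theorem \ref{t1} applies. No gaps.
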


\subsection{LaSalle type stationary principle for stochastic differential equations}

The rest of this section is devoted to a LaSalle's type principle for stochastic $(Q,T)$-affine periodic systems. First, a useful formula is stated below.

\begin{lemma}\label{lemma}
If $X_\xi$ is a solution of $(Q,T)$-affine periodic system \eqref{sde} and conditions {\bf (H1)-(H2)} are satisfied, then
\begin{equation}\label{poinc}
p_{Q^{-k}X_\xi(t+kT)}=p_{X_{Q^{-k}X_\xi(kT)}(t)}
\end{equation}
for all $k\in\mathbb{N}$ and $\xi\in L^2(\mathbb{P},\mathbb{R}^l)$.
\end{lemma}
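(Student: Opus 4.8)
The plan is to recognise the shifted and rescaled process $Y(t):=Q^{-k}X_\xi(t+kT)$ as a solution of the very same equation \eqref{sde}, started from $Q^{-k}X_\xi(kT)$ but driven by a fresh Brownian motion, and then to invoke weak uniqueness. First I would iterate the affine periodicity \eqref{eap}: an easy induction on $k$ yields
\[
f(t+kT,x)=Q^k f(t,Q^{-k}x),\qquad g(t+kT,x)=Q^k g(t,Q^{-k}x)
\]
for all $k\in\mathbb{N}$, $t\in\mathbb{R}^+$ and $x\in\mathbb{R}^l$ (the base case is exactly \eqref{eap}, and the inductive step composes one more time-shift).

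Next I would write \eqref{sde} in integral form on $[kT,t+kT]$, substitute $s=u+kT$, and pass to the shifted noise $\hat W(u):=W(u+kT)-W(kT)$, using the standard time-shift identity for the It\^o integral, $\int_{kT}^{t+kT}H(s)\,dW(s)=\int_0^t H(u+kT)\,d\hat W(u)$ for adapted $H$. Multiplying through by $Q^{-k}$ and applying the iterated periodicity, the coefficients collapse via $Q^{-k}f(u+kT,X_\xi(u+kT))=f(u,Y(u))$ and the analogous identity for $g$, giving
\[
Y(t)=Q^{-k}X_\xi(kT)+\int_0^t f(u,Y(u))\,du+\int_0^t g(u,Y(u))\,d\hat W(u).
\]
Hence $Y$ solves \eqref{sde} with the same coefficients $f,g$, initial value $Q^{-k}X_\xi(kT)$, and driving noise $\hat W$.

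Finally I would observe that $\hat W$ is again a standard Brownian motion with the same law as $W$ and, being built from the increments of $W$ after time $kT$, is independent of the $\mathcal{F}_{kT}$-measurable initial value $Q^{-k}X_\xi(kT)$. Under {\bf(H1)-(H2)} the equation is well posed, so its solution is unique in law; therefore the path law of $Y$ is determined solely by the law of its initial value together with that of the driving noise, and it coincides with the path law of $X_{Q^{-k}X_\xi(kT)}$. Reading off the one-dimensional marginal at time $t$ then gives \eqref{poinc}. I expect this last identification to be the main obstacle: one must read $X_{Q^{-k}X_\xi(kT)}$ as the solution with the prescribed \emph{initial distribution}, and it is precisely the independence of $\hat W$ from $\mathcal{F}_{kT}$ that licenses the weak-uniqueness argument, even though $Q^{-k}X_\xi(kT)$ is itself constructed from $W|_{[0,kT]}$. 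The It\^o time-shift producing the fresh noise $\hat W$ in the middle step is routine but deserves explicit mention, since it is what makes the initial value and the driving noise independent.
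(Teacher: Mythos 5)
Your proposal is correct and follows essentially the same route as the paper: rewrite $X_\xi(\cdot+kT)$ in integral form, shift the noise to $W_k(s)=W(s+kT)-W(kT)$, use the iterated affine periodicity $f(s+kT,x)=Q^kf(s,Q^{-k}x)$ (and likewise for $g$) to recognise $(Q^{-k}X_\xi(\cdot+kT),W_k)$ as a weak solution of \eqref{sde} started from $Q^{-k}X_\xi(kT)$, and conclude by weak uniqueness. Your added remarks on the independence of the shifted noise from $\mathcal{F}_{kT}$ and on reading $X_{Q^{-k}X_\xi(kT)}$ as the solution with prescribed initial distribution make explicit a point the paper leaves implicit, but the argument is the same.
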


\begin{proof}
Since $X$ is a solution of \eqref{sde} starting from $\xi$, we have
\[
\begin{split}
X_\xi(t+kT)&=\xi+\int^{t+kT}_0dX_\xi(t)\\
           &=\xi+\int^{kT}_{0}dX_\xi(t)+\int^{t+kT}_{kT}dX_\xi(t)\\
           &=X_\xi(kT)+\int^{t+kT}_{kT}f(s,X_\xi(s))ds+\int^{t+kT}_{kT}g(s,X_\xi(s))dW(s)\\
           &~\begin{split}
            =&X_\xi(kT)+\int^t_0f(s+kT,X_\xi(s+kT))ds\\
             &+\int^t_0g(s+kT,X_\xi(s+kT))dW_k(s)
            \end{split}\\
           &~\begin{split}
            =X_\xi(kT)+Q^k\Bigg(&\int^t_0f(s,Q^{-k}X_\xi(s+kT))ds\\
            &+\int^t_0g(s,Q^{-k}X_\xi(s+kT))dW_k(s)\Bigg),
            \end{split}
\end{split}
\]
where $\{W_k\}$ are as above.

Hence, $(Q^{-k}X_\xi(\cdot+kT),W_k)$ is a weak solution of \eqref{sde} with initial state $Q^{-k}X_\xi(kT)$. By the uniqueness of weak solutions, we have
\[
p_{Q^{-k}X_{\xi}(t+kT)}=p_{X_{Q^{-k}X_\xi(kT)}(t)}.
\]
\end{proof}

Now we can state our result.

\begin{theorem}\label{tla}
Assume that \eqref{sde} is a $(Q,T)$-affine periodic system where $Q\in\mathcal{O}(n)$. Continuous functions $f$ and $g$ satisfy conditions {\bf(H1)-(H2)}. Moreover, assume that 
\begin{itemize}
\item[\bf(H5)]there is a continuous function $a:\mathbb{R}^+\rightarrow\mathbb{R}^+\backslash\{0\}$ with $r:=\varlimsup_{k\rightarrow\infty}a(kT)<1$ such that for any solutions $X_{\xi_1}$ and $X_{\xi_2}$ of \eqref{sde},
\begin{equation}\label{initcont}
d_{BL}\left(p_{X_{\xi_1}(t)},p_{X_{\xi_2}(t)}\right)\leq a(t)d_{BL}\left(p_{\xi_1},p_{\xi_2}\right),
\end{equation}
where $\xi_i\in L^2(\mathbb{P},\mathbb{R}^l)$ for $i=1,2$. 
\end{itemize}
Then \eqref{sde} has a unique asymptotically stable $(Q,T)$-affine periodic solution in distribution where the uniqueness is in the sense of distribution.
\end{theorem}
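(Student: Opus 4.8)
The plan is to recast ``$(Q,T)$-affine periodicity in distribution'' as a fixed-point problem for a Poincar\'e-type operator on laws and to extract a contraction from {\bf(H5)}. First I would define $\mathcal{P}:(\mathcal{P}(\mathbb{R}^l),d_{BL})\to(\mathcal{P}(\mathbb{R}^l),d_{BL})$ by $\mathcal{P}(p_\xi)=p_{Q^{-1}X_\xi(T)}$. This is well defined: {\bf(H1)}--{\bf(H2)} give weak uniqueness (as in the proof of Theorem \ref{cor}), so the law of $X_\xi(T)$ depends only on $p_\xi$. Iterating Lemma \ref{lemma} gives $\mathcal{P}^k(p_\xi)=p_{Q^{-k}X_\xi(kT)}$, so a fixed point $\mu^*=p_{\xi^*}$ of $\mathcal{P}$ is exactly the relation $p_{Q^{-1}X_{\xi^*}(T)}=p_{\xi^*}$, the time-zero instance of \eqref{pqtd}.

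The key observation, and the only place $Q\in\mathcal{O}(l)$ is used, is that $d_{BL}$ is invariant under the orthogonal map $Q^{-k}$: if $\|h\|_{BL}\le 1$ then the pullback $h\circ Q^{-k}$ again satisfies $\|h\circ Q^{-k}\|_{BL}\le 1$, since $|Q^{-k}x-Q^{-k}y|=|x-y|$; hence $d_{BL}(p_{Q^{-k}Y_1},p_{Q^{-k}Y_2})=d_{BL}(p_{Y_1},p_{Y_2})$. Combining this with {\bf(H5)} evaluated at $t=kT$ yields $d_{BL}(\mathcal{P}^k\mu,\mathcal{P}^k\nu)\le a(kT)\,d_{BL}(\mu,\nu)$. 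Since $r=\varlimsup_{k\to\infty}a(kT)<1$, I would pick $c\in(r,1)$ and $K$ with $a(KT)\le c$, so that $\mathcal{P}^K$ is a strict contraction on the complete metric space $(\mathcal{P}(\mathbb{R}^l),d_{BL})$ (Dudley). Banach's theorem gives a unique fixed point $\mu^*$ of $\mathcal{P}^K$, and since $\mathcal{P}^K(\mathcal{P}\mu^*)=\mathcal{P}(\mathcal{P}^K\mu^*)=\mathcal{P}\mu^*$, uniqueness forces $\mathcal{P}\mu^*=\mu^*$; thus $\mu^*=p_{\xi^*}$ is the unique fixed point of $\mathcal{P}$.

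Next I would check that $X_{\xi^*}$ is genuinely a $(Q,T)$-affine periodic solution in distribution. For {\bf(C1)}, Lemma \ref{lemma} with $k=1$ gives $p_{Q^{-1}X_{\xi^*}(t+T)}=p_{X_{Q^{-1}X_{\xi^*}(T)}(t)}$; as $Q^{-1}X_{\xi^*}(T)$ has law $\mu^*=p_{\xi^*}$ and laws depend only on the initial law, the right-hand side equals $p_{X_{\xi^*}(t)}$, which is \eqref{pqtd}. Condition {\bf(C2)} is immediate from the computation inside Lemma \ref{lemma}, which already exhibits $(Q^{-1}X_{\xi^*}(\cdot+T),W_1)$ as a weak solution of \eqref{sde} with $W_1(\cdot)=W(\cdot+T)-W(T)\equivid W$. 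Uniqueness in distribution then follows because any affine periodic solution $X_\eta$ satisfies $\mathcal{P}(p_\eta)=p_{Q^{-1}X_\eta(T)}=p_\eta$, forcing $p_\eta=\mu^*$ and hence, by weak uniqueness, $p_{X_\eta(t)}=p_{X_{\xi^*}(t)}$ for all $t$.

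Finally, for asymptotic stability in distribution {\bf(C4)} I would write $t=nT+s$ with $s\in[0,T)$. Affine periodicity gives $X_{\xi^*}(nT+s)\equivid Q^nX_{\xi^*}(s)$, while Lemma \ref{lemma} gives $X_\eta(nT+s)\equivid Q^nX_{\zeta_n}(s)$ with $\zeta_n:=Q^{-n}X_\eta(nT)$, so by orthogonal invariance of $d_{BL}$ and then {\bf(H5)} at time $s$,
\[
d_{BL}\!\left(p_{X_{\xi^*}(nT+s)},p_{X_\eta(nT+s)}\right)=d_{BL}\!\left(p_{X_{\xi^*}(s)},p_{X_{\zeta_n}(s)}\right)\le a(s)\,d_{BL}\!\left(p_{\xi^*},p_{\zeta_n}\right).
\]
Here $a(s)\le\max_{[0,T]}a<\infty$, and $p_{\zeta_n}=\mathcal{P}^n(p_\eta)$ while $p_{\xi^*}=\mathcal{P}^n(p_{\xi^*})$, so writing $n=mK+j$ the contraction of $\mathcal{P}^K$ yields $d_{BL}(p_{\xi^*},\mathcal{P}^n(p_\eta))\le c^{m}\max_{0\le j<K}d_{BL}(p_{\xi^*},\mathcal{P}^j(p_\eta))\to 0$, whence the displayed quantity tends to $0$ uniformly in $s$. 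The hard part is exactly this stability step: because {\bf(H5)} only supplies $\varlimsup_k a(kT)<1$ rather than $a(t)\to 0$, applying {\bf(H5)} directly along $\mathbb{R}^+$ fails, and the decay must instead be routed through the contraction of the iterated map $\mathcal{P}^K$, using affine periodicity and the orthogonal invariance of $d_{BL}$ to fold the time $nT+s$ back into the base window $[0,T)$; orthogonality of $Q$ is essential both here and in converting {\bf(H5)} into a contraction estimate.
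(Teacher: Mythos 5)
Your proposal is correct and follows essentially the same route as the paper's proof: the same Poincar\'e map on laws, the same use of the orthogonal invariance of $d_{BL}$ under $Q^{-k}$ to convert \textbf{(H5)} at $t=kT$ into a contraction of an iterate of the map, and the same folding of $t=nT+s$ back into a bounded time window to get asymptotic stability from the boundedness of $a$ on $[0,T]$. The only (cosmetic) divergence is that you verify \textbf{(C1)}--\textbf{(C2)} directly from the fixed point via Lemma \ref{lemma}, whereas the paper notes that the fixed point makes \textbf{(H3)}--\textbf{(H4)} hold trivially and then invokes Theorem \ref{t1}.
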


\begin{proof}
We still begin with the construction of initial state. The strong well-posedness of \eqref{sde} with any initial state $\xi\in L^2(\mathbb{P},\mathbb{R}^l)$ is ensured by {\bf(H1)-(H2)}. We define this strong solution by $X_\xi$. Now we can introduce the Poincar\'e map $P:\mathcal{P}^2(\mathbb{R}^l)\rightarrow\mathcal{P}^2(\mathbb{R}^l)$,
\[
P(p_\xi)=p_{Q^{-1}X_\xi(T)}~\forall\xi\in L^2(\mathbb{P},\mathbb{R}^l).
\]
By the weak uniqueness of solutions for \eqref{sde}, $P$ is well-defined.

We claim that for all $k\in\mathbb{N}$,
\begin{equation}\label{claim}
P^k(p_\xi)=p_{Q^{-k}X_\xi(kT)}.
\end{equation}
We prove this claim by induction. For $k=0$ and $k=1$, \eqref{claim} is obtained by the definition of $P$. Assume that \eqref{claim} holds for $k-1\in\mathbb{N}$, we still need to show that \eqref{claim} holds for $k$. By the weak uniqueness of the solutions for \eqref{sde} and the induction hypothesis, we have
\[
\begin{split}
P^k(p_\xi)=&P\comp P^{k-1}(p_\xi)\\
           =&P(p_{Q^{-k+1}X_\xi(kT-T)})\\
           =&p_{Q^{-1}X_{Q^{-k+1}X_\xi(kT-T)}(T)}.
\end{split}
\]
By \eqref{poinc} we have
\[
p_{X_{Q^{-k+1}X_\xi(kT-T)}(T)}=p_{Q^{-k+1}X_\xi(kT)}.
\]
Thus,
\[
P^k(p_\xi)=p_{Q^{-k}X_\xi(kT)}.
\]
By induction, \eqref{claim} holds for all $k\in\mathbb{N}$.

By the definition of $r$, there exist a  $k_0\in\mathbb{N}$ and a constant $r_1\in(r,1)$ such that for all $k\geq k_0$,
\[
a(kT)\leq r_1.
\]
Thus, by condition \eqref{initcont} we have
\begin{align*}
&d_{BL}\left(P^k(p_\xi),P^k(p_\lambda)\right)\\
       &~~=d_{BL}\left(p_{Q^{-k}X_\xi(kT)},p_{Q^{-k}X_\lambda(kT)}\right)\\
       &~~=\sup_{\|h\|_{BL}\leq 1}\left|\int hd\left(p_{Q^{-k}X_\xi(kT)}-p_{Q^{-k}X_\lambda(kT)}\right)\right|\\
       &~~=\sup_{\|h\|_{BL}\leq 1}\left|\int_{\mathbb{R}^l}h(x)\left(\mathbb{P}(X_\xi(kT)\in dQ^kx)-\mathbb{P}(X_\lambda(kT)\in dQ^kx)\right)\right|\\
       &~~=\sup_{\|h\|_{BL}\leq 1}\left|\int_{\mathbb{R}^l}h(Q^{-k}y)\left(\mathbb{P}(X_\xi(kT)\in dy)-\mathbb{P}(X_\lambda(kT)\in dy)\right)\right|\\
       &~~=d_{BL}\left(p_{X_\xi(kT)},p_{X_\lambda(kT)}\right)\\
       &~~\leq a(kT)d_{BL}(p_\xi,p_\lambda)\\
       &~~\leq r_1d_{BL}(p_\xi,p_\lambda)
\end{align*}
for all $\xi, \lambda\in L^2(\mathbb{P},\mathbb{R}^l)$. This means that $P$ is an $r_1$-contraction map of order $k_0$. Thus, by contraction mapping fixed point theorem, $P$ has a unique fixed point $p_{*}\in\mathcal{P}^2(\mathbb{R}^l)$. Moreover, we can find a random variable $\xi_0\in L^2(\mathbb{P},\mathbb{R}^l)$ such that $p_{\xi_0}=p_{*}$. Thus, we have that
\[
p_{Q^{-1}X_{\xi_0}(T)}=P(p_{\xi_0})=p_{\xi_0}.
\]
Furthermore, \eqref{claim} leads to that
\[
p_{\xi_0}=P^k(p_{\xi_0})=p_{Q^{-k}X_{\xi_0}(kT)}~\forall k\in\mathbb{N}.
\]
This means that conditions {\bf(H3)} and {\bf(H4)} are fulfilled. Together with {\bf(H1)} and {\bf(H2)}, Equation \eqref{sde} has a unique $(Q,T)$-affine periodic solution in distribution which is $X_{\xi_0}$ by Theorem \ref{t1}.

It remains to show that $X_{\xi_0}$ is asymptotically stable in distribution. Let $\mu\in L^2(\mathbb{P},\mathbb{R}^l)$. Then {\bf(H1)-(H2)} ensure that $X_\mu$ is a strong solution of \eqref{sde}. Now we show that
\begin{equation}\label{t3asid}
d_{BL}\left(p_{X_{\xi_0}(t)},p_{X_\mu(t)}\right)\rightarrow 0~as~t\rightarrow\infty.
\end{equation}
To this end, we only need the following estimate. That is, for all $t\geq 0$ and $m\in\mathbb{N}^+$,
\begin{equation}\label{itres}
d_{BL}\left(p_{X_{\xi_0}(t+mk_0T)},p_{X_\mu(t+mk_0T)}\right)\leq a(t)r_1^md_{BL}(p_{\xi_0},p_\mu).
\end{equation}

By \eqref{poinc} and \eqref{initcont}, we have
\[
\begin{split}
d_{BL}&\left(p_{X_{\xi_0}(t+k_0T)},p_{X_\mu(t+k_0T)}\right)\\
&=d_{BL}\left(p_{Q^{-k_0}X_{\xi_0}(t+k_0T)},p_{Q^{-k_0}X_\mu(t+k_0T)}\right)\\
&=d_{BL}\left(p_{X_{\xi_0}(t+k_0T)},p_{X_\mu(t+k_0T)}\right)\\
&\leq a(t)d_{BL}\left(p_{X_{\xi_0}(k_0T)},p_{X_\mu(k_0T)}\right)\\
&\leq a(t)r_1d_{BL}\left(p_{\xi_0},p_\mu\right).
\end{split}
\]
Hence, \eqref{itres} holds for $m=1$ and any $t\in\mathbb{R}^+$.

Repeating this estimation, we have
\begin{align*}
d_{BL}&\left(p_{X_{\xi_0}(t+mk_0T)},p_{X_\mu(t+mk_0T)}\right)\\
&=d_{BL}\left(p_{X_{Q^{-mk_0}X_{\xi_0}(mk_0T)}(t)},p_{X_{Q^{-mk_0}X_\mu(mk_0T)}(t)}\right)\\
&\leq a(t)d_{BL}\left(p_{X_{\xi_0}(mk_0T)},p_{X_\mu(mk_0T)}\right)\\
&=a(t)d_{BL}\left(p_{Q^{-(m-1)k_0}X_{\xi_0}(mk_0T)},p_{Q^{-(m-1)k_0}X_\mu(mk_0T)}\right)\\
&=a(t)d_{BL}\Big(p_{X_{Q^{-(m-1)k_0T}X_{\xi_0}((m-1)k_0T)}(k_0T)},\\
&~~~~~~~~~~~~~~~~~~~p_{X_{Q^{-(m-1)k_0T}X_\mu((m-1)k_0T)}(k_0T)}\Big)\\
&\leq a(t)a(k_0T)d_{BL}\left(p_{X_{\xi_0}((m-1)k_0T)},p_{X_\mu((m-1)k_0T)}\right)\\
&\leq a(t)r_1a(k_0T)d_{BL}\left(p_{X_{\xi_0}((m-2)k_0T)},p_{X_\mu((m-2)k_0T)}\right)\\
&\cdots\\
&\leq a(t)r_1^{m-1}\cdot a(k_0T)d_{BL}\left(p_{\xi_0},p_\mu\right)\\
&\leq a(t)r_1^md_{BL}\left(p_{\xi_0},p_\mu\right).
\end{align*}
Thus, \eqref{itres} holds for $m$ and  $m$ can be arbitrarily chosen in $\mathbb{N}^+$.

Now let $t\in[0,T]$. The continuity of $a$ leads to the boundedness on $[0,T]$. Thus, by the arbitrariness of $m$ and $r_1\in(r,1)$, $X_{\xi_0}$ is asymptotically stable in distribution. This completes the proof.

\end{proof}

As in Remark \ref{h42}, we also provide a mean square condition with respect to {\bf(H5)}. Furthermore, we strengthen the restriction of function $a$. That is,
\begin{itemize}
\item[{\bf(H5)'}]there is a continuous function $a:\mathbb{R}^+\rightarrow\mathbb{R}\backslash\{0\}$ with
\[
\lim_{k\rightarrow\infty}a(kT)=0\]
such that for any solutions $X_{\xi_1}$ and $X_{\xi_2}$ of \eqref{sde},
\begin{equation}
\left\|X_{\xi_1}(t)-X_{\xi_2}(t)\right\|_2^2\leq a(t)\|\xi_1-\xi_2\|^2_2,
\end{equation}
where $\xi_1$ and $\xi_2$ are as in {\bf (H5)}.
\end{itemize}

Different from Corollary \ref{cort1}, it is not so trivial as it seems to obtain {\bf(H5)} from {\bf(H5)'}. However, {\bf(H5)'} implies {\bf(H3)-(H4)} by compressibility in mean square. The proof is based on relations between different kinds of convergence of random variables, which we would not repeat here. For more details, we refer to theorems in Section 2 of \cite{V}, where rigorous proofs are displayed.

\begin{cor}\label{cort2}
Assume that \eqref{sde} is a $(Q,T)$-affine periodic system where $Q\in\mathcal{O}(n)$. Continuous functions $f$ and $g$ satisfy conditions {\bf (H1)-(H2)} and {\bf(H5)'}. Then \eqref{sde} has a unique asymptotically stable $(Q,T)$-affine periodic solution in distribution where the uniqueness is in the sense of  distribution.
\end{cor}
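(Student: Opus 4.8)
The plan is to reproduce the fixed-point scheme of Theorem \ref{tla}, but to extract a genuine contraction \emph{on a complete space of distributions} directly from the mean-square hypothesis \textbf{(H5)'}, thereby bypassing the delicate passage from \textbf{(H5)'} to \textbf{(H5)}. First I would work on the complete metric space $(\mathcal{P}^2(\mathbb{R}^l),W_2)$, where $W_2$ is the Wasserstein-$2$ distance, and keep the same Poincar\'e map $P(p_\xi)=p_{Q^{-1}X_\xi(T)}$ as in Theorem \ref{tla}, which satisfies $P^k(p_\xi)=p_{Q^{-k}X_\xi(kT)}$ by \eqref{claim}. Given $\mu,\nu\in\mathcal{P}^2(\mathbb{R}^l)$, choose $\xi,\eta\in L^2(\mathbb{P},\mathbb{R}^l)$ realizing an optimal $W_2$-coupling (so $\|\xi-\eta\|_2=W_2(\mu,\nu)$) and independent of $W$. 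Since $Q\in\mathcal{O}(n)$ is an isometry of $\mathbb{R}^l$, its push-forward preserves $W_2$, and together with \textbf{(H5)'} this gives
\[
W_2\big(P^k(\mu),P^k(\nu)\big)^2=W_2\big(p_{X_\xi(kT)},p_{X_\eta(kT)}\big)^2\le \|X_\xi(kT)-X_\eta(kT)\|_2^2\le a(kT)\,W_2(\mu,\nu)^2.
\]
Because $a(kT)\to 0$, some iterate $P^{k_0}$ is a genuine contraction, and the Banach fixed-point theorem applied to $P^{k_0}$ produces a unique fixed measure $p_*\in\mathcal{P}^2(\mathbb{R}^l)$, represented by some $\xi_0\in L^2(\mathbb{P},\mathbb{R}^l)$ with $P(p_{\xi_0})=p_{\xi_0}$.

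Next I would verify, exactly as at the end of the proof of Theorem \ref{tla}, that this fixed point forces \textbf{(H3)} and \textbf{(H4)}: from $p_{Q^{-n}X_{\xi_0}(nT)}=P^n(p_{\xi_0})=p_{\xi_0}$ one gets $\mathbb{E}|Q^{-n}X_{\xi_0}(nT)|^2=\mathbb{E}|\xi_0|^2$ for all $n$, which is \textbf{(H3)}, while every summand in \eqref{waHc} equals $d_{BL}(p_{\xi_0},p_{\xi_0})=0$, which is \textbf{(H4)}. Invoking Theorem \ref{t1} then shows $X_{\xi_0}$ is a $(Q,T)$-affine periodic solution in distribution, and uniqueness of the fixed point $p_*$ yields uniqueness in the sense of distribution.

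It remains to prove asymptotic stability in distribution. For arbitrary $\mu\in L^2(\mathbb{P},\mathbb{R}^l)$ I would split $t=s+kT$ with $s\in[0,T)$ and exploit the shift structure of Lemma \ref{lemma}: since $W_k(\cdot)=W(\cdot+kT)-W(kT)$ is independent of $W|_{[0,kT]}$, the variables $Q^{-k}X_{\xi_0}(kT)$ and $Q^{-k}X_\mu(kT)$ are legitimate $W_k$-independent initial data for the shifted equation, and $\big(Q^{-k}X_{\xi_0}(\cdot+kT),W_k\big)$, $\big(Q^{-k}X_\mu(\cdot+kT),W_k\big)$ are the corresponding solutions. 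Two applications of \textbf{(H5)'} together with the orthogonality of $Q$ then give
\[
\|X_{\xi_0}(s+kT)-X_\mu(s+kT)\|_2^2\le a(s)\,\|X_{\xi_0}(kT)-X_\mu(kT)\|_2^2\le a(s)\,a(kT)\,\|\xi_0-\mu\|_2^2.
\]
As $s\mapsto a(s)$ is bounded on the compact interval $[0,T]$ and $a(kT)\to 0$, the right-hand side tends to $0$ as $t=s+kT\to\infty$; combined with the elementary bound $d_{BL}(p_{X_{\xi_0}(t)},p_{X_\mu(t)})\le W_2(p_{X_{\xi_0}(t)},p_{X_\mu(t)})\le\|X_{\xi_0}(t)-X_\mu(t)\|_2$ this yields \textbf{(C4)}.

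The main obstacle is the coupling issue concealed in the step from a \emph{pathwise} $L^2$-contraction, which \textbf{(H5)'} asserts for two solutions driven by the \emph{same} Brownian motion with coupled initial data, to a contraction of the Poincar\'e map on the space of \emph{laws} under a \emph{complete} metric. The device that resolves it is to metrize $\mathcal{P}^2(\mathbb{R}^l)$ by $W_2$ rather than by $d_{BL}$: optimal $W_2$-couplings convert the distributional distance $W_2(\mu,\nu)$ into an admissible initial $L^2$-distance $\|\xi-\eta\|_2$ on which \textbf{(H5)'} can act, the orthogonality of $Q$ keeps $W_2$ invariant under the affine twist, and one must confirm the independence of each shifted increment $W_k$ from the past so that \textbf{(H5)'} applies legitimately at every step. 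A pleasant by-product is that deriving \textbf{(H5)} itself becomes unnecessary, which is precisely the point that is \emph{not} trivial about passing from \textbf{(H5)'} to \textbf{(H5)}.
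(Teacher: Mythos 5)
Your proposal is correct in substance, and it takes a genuinely different route from the paper: the paper does not actually write out a proof of this corollary, but only remarks that {\bf(H5)'} implies {\bf(H3)--(H4)} ``by compressibility in mean square,'' refers to \cite{V} for the relations between modes of convergence, and explicitly concedes that deducing {\bf(H5)} from {\bf(H5)'} --- which would let one simply quote Theorem \ref{tla} --- is not trivial. You sidestep that difficulty by replacing the $d_{BL}$-contraction of Theorem \ref{tla} with a contraction of the same Poincar\'e map in the Wasserstein-$2$ metric: an optimal coupling turns $W_2(\mu,\nu)$ into an initial $L^2$-distance on which {\bf(H5)'} acts, orthogonality of $Q$ preserves $W_2$, and $(\mathcal{P}^2(\mathbb{R}^l),W_2)$ is genuinely complete (unlike $\mathcal{P}^2$ under $d_{BL}$, where second moments can escape along $d_{BL}$-Cauchy sequences --- a point the paper's own fixed-point argument glosses over). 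This buys a self-contained proof and, as a bonus, mean-square asymptotic stability, which is stronger than {\bf(C4)}. Three details deserve a sentence each in a full write-up: (i) the optimal coupling $(\xi,\eta)$ must be realizable on a space rich enough that the pair is $\mathcal{F}_0$-measurable and independent of $W$ (an enlargement the paper also performs tacitly); (ii) the unique fixed point of $P^{k_0}$ is fixed by $P$ because $P$ and $P^{k_0}$ commute; (iii) applying {\bf(H5)'} to the time-shifted pair driven by $W_k$ requires weak uniqueness of the \emph{coupled} $2l$-dimensional system (guaranteed by {\bf(H2)}), since $\|X_{\xi_1}(t)-X_{\xi_2}(t)\|_2$ depends on the joint law and not only on the marginals; this is the same shift device the paper uses inside the proof of Theorem \ref{tla}, so it is consistent with the intended reading of {\bf(H5)'}.
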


%\begin{proof}
%We only need to prove that {\bf(H3)-(H4)} hold.

%``{\bf(H5)'$\Rightarrow${\bf(H4)}}'': As discussed in Lemma \ref{lemma}, $(Q^{-1}X_{\xi}(T+\cdot),W_{1}(\cdot))$ is a weak solution of \eqref{sde}. Take $\eta\equivid Q^{-1}X_{\xi}(T)$. By the uniqueness of weak solution for \eqref{sde},
%\[
%X_\eta(t)\equivid Q^{-1}X_\xi(T+t)~~~~\forall~t\in\mathbb{R}^+,
%\]
%where $X_\eta$ is the solution of \eqref{sde} with respect of $W$. Thus,
%\begin{align*}
%d_{BL}&\left(p_{Q^{-N-1}X_\xi(NT+T)},p_{Q^{-N}X_\xi(NT)}\right)\\
%&=d_{BL}\left(p_{Q^{-N}X_\eta(NT)},p_{Q^{-N}X_\xi(NT)}\right)\\
%&\leq C_1\left\|Q^{-N}X_\eta(NT)-Q^{-N}X_\xi(NT)\right\|_2^2\\
%&\leq C_1a(NT)\mathbb{E}|\eta-\xi|^2\\
%&\rightarrow 0~~\text{as}~~t\rightarrow\infty,
%\end{align*}
%where $C_1$ is a positive constant independent of $N$. Therefore, Condition {\bf (H4)} holds. For the relation between different kinds of convergence, we refer to Section 2 in \cite{V} for more details.

%``{\bf(H5)'}$\Rightarrow${\bf(H3)}'': Applying Theorem 2.7 in \cite{V} and {\bf(H4)}, we have that $\{Q^{-N-1}X_\xi(NT+T)-Q^{-N}X_\xi(NT)\}_{N=0}^\infty$ is convergent in mean square to $0$ as $N\rightarrow\infty$. Thus, Condition {\bf(H3)} holds.
%\end{proof}

\section{Applications}\label{app}

\subsection{Existence and asymptotic stability of stochastic affine-periodic solutions via Lyapunov's method}

Similar to the discussion in \cite{CH}, Lyapunov's method is also available in case that $Q\in\mathcal{O}(n)$. In fact, a criterion on existence of mean square asymptotically stable $(Q,T)$-affine periodic solutions of \eqref{sde} is given as follows.

\begin{theorem}\label{t2}
Assume that $f$ and $g$ are $(Q,T)$-affine periodic satisfying {\bf (H1)} and {\bf (H2)}. Moreover, assume that $Q\in\mathcal{O}(l)$. Suppose that there exists a Lyapunov function $V:\mathbb{R}^+\times\mathbb{R}^l\rightarrow\mathbb{R}$ satisfying the following conditions.
\begin{itemize}
\item[{\bf (H6)}]There exist positive constants $A$ and $B$ such that,
    \[
    A|x|^2\leq V(t,x)\leq B|x|^2~~\forall x\in\mathbb{R}^l.
    \]
\item[{\bf (H7)}]There exists a function $\alpha:\mathbb{R}^+\rightarrow\mathbb{R}$ locally integrable and so that $t\rightarrow\infty$ implies $\int^{t}_0\alpha(s)ds\rightarrow-\infty$  as $t\rightarrow\infty$ such that
    \[
    \mathcal{L}V(t,x-y)\leq\alpha(t)V(t,x-y)~\forall(t,x,y)\in\mathbb{R}^+\times\mathbb{R}^l\times\mathbb{R}^l,
    \]
    where
    \begin{equation}\label{lyaop}
    \begin{split}
    \mathcal{L}V&(t,x-y)\\
              :=&\frac{\partial V}{\partial t}(t,x-y)+\left\langle f(t,x)-f(t,y),\frac{\partial V}{\partial x}(t,x-y)\right\rangle\\
                +&\frac{1}{2}\trace\left[\left(g(t,x)-g(t,y)\right)^\top\hess V(t,x-y)\left(g(t,x)-g(t,y)\right)\right].
    \end{split}
    \end{equation}
\end{itemize}
Then there exists a unique  mean square asymptotically stable $(Q,T)$-affine periodic solution in distribution of \eqref{sde}.
\end{theorem}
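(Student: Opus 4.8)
The plan is to reduce Theorem~\ref{t2} to the already-established Corollary~\ref{cort2}: I will show that the Lyapunov hypotheses \textbf{(H6)}--\textbf{(H7)} force the mean square contraction condition \textbf{(H5)'}, and then read off mean square asymptotic stability \textbf{(C3)} directly from the same estimate.

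First I would fix two initial data $\xi_1,\xi_2\in L^2(\mathbb{P},\mathbb{R}^l)$ and set $Z(t)=X_{\xi_1}(t)-X_{\xi_2}(t)$, which solves
\[
dZ(t)=\big(f(t,X_{\xi_1}(t))-f(t,X_{\xi_2}(t))\big)dt+\big(g(t,X_{\xi_1}(t))-g(t,X_{\xi_2}(t))\big)dW(t).
\]
Applying It\^o's formula to $V(t,Z(t))$, the finite-variation part is exactly $\mathcal{L}V(t,Z(t))$ evaluated at $x=X_{\xi_1}(t)$, $y=X_{\xi_2}(t)$ --- this is precisely why the operator in \textbf{(H7)} is written in terms of the difference $x-y$. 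Because \textbf{(H1)}--\textbf{(H2)} guarantee finite second moments on every $[0,t]$ and \textbf{(H6)} forces at most quadratic growth of $V$, the stochastic integral is a true martingale, so taking expectations annihilates it and yields, with \textbf{(H7)},
\[
\frac{d}{dt}\mathbb{E}V(t,Z(t))=\mathbb{E}\,\mathcal{L}V(t,Z(t))\leq\alpha(t)\,\mathbb{E}V(t,Z(t)).
\]

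Next I would integrate this scalar differential inequality by Gronwall to get $\mathbb{E}V(t,Z(t))\leq\mathbb{E}V(0,Z(0))\exp(\int_0^t\alpha(s)ds)$ and then sandwich using \textbf{(H6)}:
\[
A\,\mathbb{E}|Z(t)|^2\leq\mathbb{E}V(t,Z(t))\leq B\,\mathbb{E}|Z(0)|^2\exp\Big(\int_0^t\alpha(s)ds\Big).
\]
Since $Z(0)=\xi_1-\xi_2$, setting $a(t):=(B/A)\exp(\int_0^t\alpha(s)ds)$ gives
\[
\|X_{\xi_1}(t)-X_{\xi_2}(t)\|_2^2\leq a(t)\|\xi_1-\xi_2\|_2^2 .
\]
The function $a$ is continuous and strictly positive, and $\int_0^t\alpha\to-\infty$ forces $a(t)\to0$, in particular $a(kT)\to0$; this is exactly \textbf{(H5)'}.

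With \textbf{(H1)}--\textbf{(H2)}, $Q\in\mathcal{O}(l)$ and \textbf{(H5)'} in hand, Corollary~\ref{cort2} immediately supplies a unique $(Q,T)$-affine periodic solution in distribution, which I denote $X_{\xi_0}$, and which is asymptotically stable in distribution. It only remains to upgrade this to \textbf{(C3)}: applying the displayed contraction estimate with $\xi_1=\xi_0$ and $\xi_2=\eta$ for an arbitrary initial state $\eta$ gives $\mathbb{E}|X_{\xi_0}(t)-X_\eta(t)|^2\leq a(t)\mathbb{E}|\xi_0-\eta|^2\to0$, which is mean square asymptotic stability. The genuinely substantive content lives in the earlier results; here the only point demanding care is justifying that the martingale part of It\^o's formula has zero expectation (which relies on the second-moment bounds coming from \textbf{(H1)}--\textbf{(H2)}) and matching $\mathcal{L}V$ with the drift of $V(t,Z(t))$. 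The reduction to Corollary~\ref{cort2} is then routine.
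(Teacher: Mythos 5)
Your proposal is correct, and it rests on exactly the same analytic core as the paper's proof --- the It\^o/Gronwall estimate $\mathbb{E}V(t,X_{\xi_1}(t)-X_{\xi_2}(t))\le e^{\int_0^t\alpha(s)ds}\,\mathbb{E}V(0,\xi_1-\xi_2)$ obtained from \textbf{(H7)} after annihilating the martingale term --- but you package it differently. The paper applies this estimate only to the particular pair $X_1=Q^{-1}X_\xi(\cdot+T)$ and $X_\xi$ (using the affine periodicity of $f,g$ to see that $X_1$ solves the same equation driven by a shifted Brownian motion), deduces $\mathbb{E}|Q^{-k-1}X_\xi(t+(k+1)T)-Q^{-k}X_\xi(t+kT)|^2\to0$ uniformly in $t$, reads off \textbf{(H3)}--\textbf{(H4)} from this, invokes Theorem \ref{t1}, and then proves mean square asymptotic stability separately from the same estimate. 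You instead apply the estimate to an arbitrary pair of solutions, extract \textbf{(H5)'} with $a(t)=(B/A)e^{\int_0^t\alpha(s)ds}$, and invoke Corollary \ref{cort2}, upgrading its distributional stability to the mean square statement \textbf{(C3)} at the end. Your reduction buys something: the paper's passage from ``consecutive differences tend to $0$'' to the uniform bound \textbf{(H3)} is not immediate (the increments are only $O(e^{\int_0^{kT}\alpha})$, which need not be summable), whereas \textbf{(H5)'} is verified outright; the price is that you lean on Corollary \ref{cort2}, whose proof the paper only sketches by reference, while the paper's route depends only on the fully proved Theorem \ref{t1}. One caveat common to both arguments, which you partially flag: \textbf{(H6)} bounds $V$ but not $\partial V/\partial x$, so the assertion that the stochastic integral in It\^o's formula has vanishing expectation requires an additional (standard but unstated) growth or localization assumption on $\nabla_x V$.
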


\begin{proof}
Firstly, we show that there exists a unique $(Q,T)$-affine periodic solution of \eqref{sde}. Let $X_\xi$ be a solution of \eqref{sde} with initial value $\xi\in L^2(\mathbb{P},\mathbb{R}^l)$. Let $X_k(t)=Q^{-k}X_\xi(t+kT)$ a.s. where $k\in\mathbb{N}$. From \eqref{sdent} we know that there is a probability space $(\tilde{\Omega},\tilde{\mathcal{F}},\tilde{\mathbb{P}})$ and a Brownian motion $\tilde{W}$ such that $\{X_k\}$ are the solutions of the following stochastic differential equations
\[
\left\{
\begin{split}
&dX_k=f(t,X_k)dt+g(t,X_k)d\tilde{W},\\
&X_k(0)=Q^{-k}\xi.
\end{split}
\right.
\]

Define an auxiliary stochastic process $Y$ as follows.
\[
Y(t)=e^{-\int^t_0\alpha(s)ds}V(t,X_1(t)-X_\xi(t)).
\]
By It\^o formula, we have
\[
\begin{split}
dY(t)=&-\alpha(t)Y(t)dt+e^{-\int^t_0\alpha(s)ds}\mathcal{L}V(t,X_1(t)-X_\xi(t))dt\\
      &+\left\langle e^{-\int^t_0\alpha(s)ds}\frac{\partial V}{\partial x}(t,X_1(t)-X_\xi(t)),\Big(g(t,X_1(t))-g(t,X_\xi(t))\Big)d\tilde{W}\right\rangle.
\end{split}
\]
Thus assumption {\bf (H7)} leads to that
\[
\begin{split}
\mathbb{E}Y(t)=&~\mathbb{E}\left(Y(0)+\int^t_0dY(s)\right)\\
              =&~\mathbb{E}V(0,X_1(0)-\xi)\\
               &~\begin{split}
                 +\mathbb{E}\int^t_0e^{-\int^s_0\alpha(\tau)d\tau}\Big(&-\alpha(s)V(s,X_1(s)-X_\xi(s))\\
                                                                 &+\mathcal{L}V(s,X_1(s)-X_\xi(s))\Big)ds
                 \end{split}\\
           \leq&~\mathbb{E}V(0,X_1(0)-\xi).
\end{split}
\]

Note that
\[
\begin{split}
X_{k+1}(t)&=Q^{-k-1}X_\xi(t+(k+1)T)~a.s.\\
          &=Q^{-k}Q^{-1}X_\xi(t+kT+T)~a.s.\\
          &=Q^{-k}X_1(t+kT)~a.s..
\end{split}
\]
Hence assumptions {\bf(H6)} and {\bf(H7)} imply that
\[
\begin{split}
\mathbb{E}V&(t+kT,Q^k(X_{k+1}(t)-X_k(t)))\\
          =&~\mathbb{E}V(t+kT,X_1(t+kT)-X_\xi(t+kT))\\
       \leq&~e^{\int^{t+kT}_0\alpha(s)ds}\mathbb{E}V(0,X_1(0)-\xi)\\
       \leq&~e^{\int^{t+kT}_0\alpha(s)ds}\cdot B\mathbb{E}|X_1(0)-\xi|^2\rightarrow0~as~k\rightarrow\infty.
\end{split}
\]
That is, for any $\epsilon>0$, there is a $k_0=k_0(\epsilon)\in\mathbb{N}$ such that $k\geq k_0$ implies that
\begin{equation}\label{left}
\begin{split}
A\mathbb{E}|X_{k+1}(t)-X_k(t)|^2&=\mathbb{E}A|Q^k(X_{k+1}(t)-X_k(t))|^2\\
                                             &\leq\mathbb{E}V(t+kT,Q^k(X_{k+1}(t)-X_k(t)))\\
                                             &\leq e^{\int^{t+kT}_0\alpha(s)ds}\cdot B\mathbb{E}|X_1(0)-\xi|^2\\
                                             &\leq A\epsilon,
\end{split}
\end{equation}
uniformly on $t\in\mathbb{R}^+$.

By {\bf(H6)}, we have
\[
\mathbb{E}|Q^{-k-1}X_\xi(t+(k+1)T)-Q^{-k}X_\xi(t+kT)|^2\rightarrow0~as~k\rightarrow\infty
\]
for all $t\geq 0$. This shows that {\bf(H3)} holds.

Moreover, $\{Q^{-K-1}X_\xi(t+(k+1)T)-Q^{-k}X_\xi(t+kT)\}$ is convergent to $0$ in mean square, which implies the convergence of this sequence in probability and distribution by Chebyshev's inequality. Namely, {\bf(H4)} is also satisfied. Thus, there exists a $(Q,T)$-affine periodic solution $Z$ of \eqref{sde} in distribution by Theorem \ref{t1}.

Next we show that $Z$ is asymptotically stable in mean square as $t\rightarrow\infty$, which also leads to the uniqueness of $Z$. Repeating the estimate above, for any other solution $Z_1$ of \eqref{sde}, we have for any given $\epsilon>0$,
\begin{equation}\label{asyest}
\begin{split}
A\mathbb{E}|Z(t)-Z_1(t)|^2&\leq e^{\int^{t}_0\alpha(s)ds}B\mathbb{E}|Z(0)-Z_1(0)|^2\\
                            &\leq A\epsilon,
\end{split}
\end{equation}
when $t$ is large enough. Thus,
\[
\mathbb{E}|Z(t)-Z_1(t)|^2\rightarrow 0~as~t\rightarrow\infty.
\]
This completes the proof of Theorem \ref{t2}.
\end{proof}

\begin{remark}
Condition {\bf(H6)} is a little bit different from Condition (i) of Theorem 1.3 in \cite{CH}. In fact, function $a$ in \cite{CH} should be required as a convex function, which keeps the direction of inequalities in \eqref{left}. We correct this error here. To simplify the requirement of Lyapunov functions, linear functions $A$ and $B$ are adopted in our theorem.
\end{remark}

For Theorem \ref{tla}, we also have following result.

\begin{theorem}
Assume that \eqref{sde} is a $(Q,T)$-affine periodic system with conditions {\bf(H1)-(H2)} satisfied and that $f$ and $g$ are smooth enough with respect to $x$. Moreover, assume that 
\begin{itemize}
\item[{\bf(H8)}]there are $C^1$ real positive definite matrix-valued function $D: \mathbb{R}^+\rightarrow\mathbb{R}^{l\times l}$ with $D=D^\top$ and locally integrable bounded from above function $\alpha:\mathbb{R}^+\rightarrow\mathbb{R}$ with $\int^{kT}_0\alpha(s)ds\rightarrow-\infty$ as $k\rightarrow\infty$, such that 
\begin{align}
&D'(t)+2\jac_f^\top(t,x)D(t)+\sum^m_{i=1}\jac_{g_i}^\top(t,x)D(t)\jac_{g_i}(t,y)<\alpha(t)D(t),\\
&D(kT)=D(0)~~\forall~~k\in\mathbb{N}
\end{align}
for any $t\in\mathbb{R}^+$ and $x,y\in\mathbb{R}^l$. 
\end{itemize}
Then \eqref{sde} admits a unique asymptotically stable $(Q,T)$-affine periodic solution in distribution.
\end{theorem}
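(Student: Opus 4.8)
The plan is to recognize {\bf(H8)} as the infinitesimal form of a quadratic Lyapunov estimate for the functional $V(t,z)=z^\top D(t)z$ evaluated along the difference of two solutions, use it to verify condition {\bf(H5)'}, and then quote Corollary \ref{cort2}. First I would fix $\xi_1,\xi_2\in L^2(\mathbb{P},\mathbb{R}^l)$, set $Z(t)=X_{\xi_1}(t)-X_{\xi_2}(t)$, and apply It\^o's formula to $V(t,Z(t))$. Because $D$ is symmetric and $C^1$, the generator (drift) term is
\[
\mathcal{L}V=Z^\top D'(t)Z+2Z^\top D(t)\big(f(t,X_{\xi_1})-f(t,X_{\xi_2})\big)+\trace\big[(g(t,X_{\xi_1})-g(t,X_{\xi_2}))^\top D(t)(g(t,X_{\xi_1})-g(t,X_{\xi_2}))\big],
\]
and the remaining martingale part is $2Z^\top D(t)(g(t,X_{\xi_1})-g(t,X_{\xi_2}))\,dW$.

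The crux is to match $\mathcal{L}V$ with the matrix inequality in {\bf(H8)}. Since $f,g$ are smooth in $x$, the fundamental theorem of calculus gives $f(t,X_{\xi_1})-f(t,X_{\xi_2})=\big(\int_0^1\jac_f(t,x_\theta)\,d\theta\big)Z$ and, columnwise, $g_i(t,X_{\xi_1})-g_i(t,X_{\xi_2})=\big(\int_0^1\jac_{g_i}(t,x_\theta)\,d\theta\big)Z$, where $x_\theta=X_{\xi_2}+\theta Z$. Substituting and using symmetry of $D$, the first-order term becomes $\int_0^1 2Z^\top\jac_f^\top(t,x_\theta)D(t)Z\,d\theta$, while the trace term expands into the double integral $\int_0^1\!\int_0^1\sum_i Z^\top\jac_{g_i}^\top(t,x_\theta)D(t)\jac_{g_i}(t,y_{\theta'})Z\,d\theta\,d\theta'$ with $y_{\theta'}=X_{\xi_2}+\theta'Z$, the two integrated Jacobians being evaluated at \emph{distinct} intermediate points. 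This is exactly why {\bf(H8)} is stated for independent $x,y$: writing $Z^\top D'Z$ and the first-order term also as integrals over $[0,1]^2$ and applying the quadratic form in {\bf(H8)} with $x=x_\theta$, $y=y_{\theta'}$ gives, pointwise and almost surely,
\[
\mathcal{L}V(t,Z)\le\alpha(t)\,Z^\top D(t)Z=\alpha(t)V(t,Z).
\]

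With this differential inequality the rest is routine. I would pass to $U(t)=e^{-\int_0^t\alpha(s)ds}V(t,Z(t))$, for which $dU\le e^{-\int_0^t\alpha}\,2Z^\top D(g(t,X_{\xi_1})-g(t,X_{\xi_2}))\,dW$; localizing along $\tau_n=\inf\{t:|Z(t)|\ge n\}$ (no explosion by {\bf(H1)}) to remove the martingale, taking expectations, and using Fatou's lemma (note $V\ge0$) yields $\mathbb{E}V(t,Z(t))\le e^{\int_0^t\alpha(s)ds}\,\mathbb{E}V(0,Z(0))$. Positive definiteness of $D$ then gives $\lambda_{\min}(D(t))|Z(t)|^2\le V(t,Z(t))$ and $V(0,Z(0))\le\lambda_{\max}(D(0))|Z(0)|^2$, so that $\|X_{\xi_1}(t)-X_{\xi_2}(t)\|_2^2\le a(t)\|\xi_1-\xi_2\|_2^2$ with $a(t)=\frac{\lambda_{\max}(D(0))}{\lambda_{\min}(D(t))}e^{\int_0^t\alpha(s)ds}$. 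This $a$ is continuous and strictly positive (as $D$ is $C^1$ and positive definite and $\alpha$ is locally integrable and bounded above), and the constraint $D(kT)=D(0)$ together with $\int_0^{kT}\alpha\to-\infty$ forces $a(kT)\to0$. Hence {\bf(H5)'} holds, and Corollary \ref{cort2} delivers the unique asymptotically stable $(Q,T)$-affine periodic solution in distribution.

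I expect the main obstacle to be the second paragraph: producing the double-integral representation of the diffusion contribution and identifying its two independent integration variables with the two free arguments $x,y$ of {\bf(H8)}, while getting the bookkeeping right (symmetry of $D$, the factor $2$, and the interchange of the finite sum with the integral). The analytic steps afterward—localization to justify taking expectations under only second-moment control, and the extraction of $a$ with $a(kT)\to0$ from $D(kT)=D(0)$—are comparatively standard.
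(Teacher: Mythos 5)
Your proposal is correct and follows essentially the same route as the paper: the quadratic Lyapunov function $V(t,z)=z^\top D(t)z$, the mean-value/double-integral representation of $\mathcal{L}V$ whose two integration variables are matched to the two free arguments $x,y$ of {\bf(H8)}, the exponential weighting yielding $\mathbb{E}V(t,Z(t))\le e^{\int_0^t\alpha(s)ds}\,\mathbb{E}V(0,Z(0))$, and the reduction to Corollary \ref{cort2} via {\bf(H5)'}. The only (harmless) divergence is at the end: the paper runs a one-period Gronwall estimate to reduce to the times $kT$ where $D(kT)=D(0)$, whereas you bound directly with $\lambda_{\min}(D(t))$, which works equally well since $a(kT)\to0$ only uses $D$ at multiples of $T$; your explicit localization of the stochastic integral is an extra care the paper's version glosses over.
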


\begin{proof}
Define function $V:\mathbb{R}^+\times\mathbb{R}^l\rightarrow\mathbb{R}$ as
\[
V(t,x)=x^\top D(t)x.
\]
Then $V(t,x)$ is positive definite on $\mathbb{R}^l$ for all $t\geq 0$. Hence, by mean value theorem, 
\begin{align*}
\mathcal{L}&V(t,x-y)\\
=&\frac{\partial V}{\partial t}(t,x-y)+\left\langle f(t,x)-f(t,y),\frac{\partial V}{\partial x}(t,x-y)\right\rangle\\
&+\frac{1}{2}\trace\left[(g(t,x)-g(t,y))^\top\hess V(t,x-y)(g(t,x)-g(t,y))\right]\\
=&(x-y)^\top D'(t)(x-y)+2(f(t,x)-f(t,y))^\top D(t)(x-y)\\
&+\sum^{m}_{i=1}(g_i(t,x)-g_i(t,y))^\top D(t)(g_i(t,x)-g_i(t,y))\\
=&\Delta^\top D(t)\Delta+2\Delta^\top\left(\int^1_0\jac_f^\top(t,y+s\Delta)ds\right)D(t)\Delta\\
&+\Delta^\top\left[\sum^m_{i=1}\left(\int^1_0\jac_{g_i}^\top(t,y+s\Delta)ds\right)D(t)\left(\int^1_0\jac_{g_i}(t,y+s\Delta)ds\right)\right]\\
=&\Delta^\top\int^1_0\int^1_0\Bigg(D'(t)+2\jac_f^\top(t,y+s\Delta)D(t)\\
&~~~~~~~~~~~~~~~~~+\sum^m_{i=1}\jac_{g_i}^\top(t,y+s\Delta)D(t)\jac_{g_i}(t,y+\tau\Delta)\Bigg)dsd\tau\Delta\\
<&\Delta^\top\alpha(t)D(t)\Delta\\
=&\alpha(t)V(t,x-y),
\end{align*}
where $\Delta=x-y$. Take $Y(t)=V(t,X_\xi(t)-X_\eta(t))e^{-\int^t_0\alpha(s)ds}~~a.s.$ for $\xi,\eta\in L^2(\mathbb{P},\mathbb{R}^l)$. Thus, by It\^o's formula and the calculation in Theorem \ref{t2},
\[
\mathbb{E}Y(t)\leq\mathbb{E}V(0,\xi-\eta).
\] 
Therefore,
\[
\mathbb{E}V(t,X_\xi(t)-X_\eta(t))\leq\mathbb{E}V(0,\xi-\eta)e^{\int^t_0\alpha(s)ds}~~\forall~~t\in\mathbb{R}^+.
\]

Given any $t\in\mathbb{R}^+$, there must be a $k\in\mathbb{N}$ such that $t\in[kT,KT+T)$. Repeating the calculation in Step 2 of Theorem \ref{t1}, we have
\[
\mathbb{E}|X_\xi(t)-X_\eta(t)|^2\leq 3\mathbb{E}|X_\xi(kT)-X_\eta(kT)|^2e^{3(T+1)TL},
\]
where $L$ is the Lipschitz constant in {\bf(H2)}.

Note that for any $k\in\mathbb{N}$,
\[
\begin{split}
V(kT, X_\xi(kT)-X_\eta(kT))&=(X_\xi(kT)-X_\eta(kT))^\top D(kT)(X_\xi(kT)-X_\eta(kT))\\
&=(X_\xi(kT)-X_\eta(kT))^\top D(0)(X_\xi(kT)-X_\eta(kT))\\
&=V(0,X_\xi(kT)-X_\eta(kT))\\
&\leq \mathbb{E}V(0,\xi-\eta)e^{\int^{kT}_0\alpha(s)ds}.
\end{split}
\]
Moreover, since $D$ is real positive definite, there exist constants $\lambda,\Lambda>0$ such that for all $x\in\mathbb{R}^l$,
\[
\lambda|x|^2\leq V(0,x)\leq\Lambda|x|^2.
\]

Hence, for all $t\in[kT,kT+1)$,
\begin{align*}
\mathbb{E}|X_\xi(t)-X_\eta(t)|^2&\leq 3e^{3(T+1)TL}\mathbb{E}|X_\xi(kT)-X_\eta(kT)|^2\\
&\leq\frac{3}{\lambda}e^{3(T+1)TL+\int^{kT}_0\alpha(s)ds}\mathbb{E}|(\xi-\eta)^\top D(0)(\xi-\eta)|^2\\
&\leq\frac{3\Lambda}{\lambda}e^{3(T+1)TL+\int^{kT}_0\alpha(s)ds}\mathbb{E}|\xi-\eta|^2.
\end{align*}
Taking $a(t)=\frac{3\Lambda}{\lambda}e^{3(T+1)TL+\int^{t}_0\alpha(s)ds}$, then Condition ${\bf(H5)'}$ holds because of the locally integrability and boundedness of $\alpha$. Therefore, the proof is completed by Corollary \ref{cort2}.
\end{proof}

\subsection{Example}

Consider the following stochastic differential equation
\begin{equation}\label{eg}
\begin{split}
dX(t)&=\left[-a(t)|X(t)|^2X(t)-bX(t)+e(t)\right]dt+\left[c(t)X(t)+e(t)\right]dW(t)\\
     &=:f(t,X(t))dt+g(t,X(t))dW(t),
\end{split}
\end{equation}
where $a,c:\mathbb{R}^+\rightarrow\mathbb{R}$ are continuous $T$-periodic functions satisfying 
\begin{equation}\label{egcond}
\int^T_0\left(c^2(t)-2b\right)dt=-\varepsilon<0,
\end{equation}
$b\in(0,1)$ is constant and
\[
e(t)=\left(
\begin{matrix}
\sin\omega_1 t\\
\cos\omega_1 t\\
\vdots\\
\sin\omega_m t\\
\cos\omega_m t
\end{matrix}\right),~n=2m.
\]
Moreover, $a(t)\geq 0$ for all $t\geq 0$.

Thus we have
\[
Q=\left(
\begin{matrix}
\cos\omega_1T&\sin\omega_1T&0&0&\ldots&\ldots&\ldots\\
-\sin\omega_1T&\cos\omega_1T&0&0&&&\\
0&0&\cos\omega_2T&\sin\omega_2T&\ldots&\ldots&\ldots\\
0&0&-\sin\omega_2T&\cos\omega_2T&&&\\
\vdots&\vdots&&&\ddots&\vdots&\vdots\\
\vdots&\vdots&\ldots&\ldots&&\cos\omega_mT&\sin\omega_mT\\
\vdots&\vdots&&&&-\sin\omega_mT&\cos\omega_mT
\end{matrix}
\right)
\]

Let
\[
V(x)=\frac{1}{2}|x|^2.
\]
Hence, it can be verified that $f$ and $g$ are $(Q,T)$-affine periodic and {\bf(H6)} is satisfied. Although $f$ does not satisfy condition {\bf(H1)-(H2)}, we can still get global existence and boundedness of \eqref{eg} by Lyapunov's method. Let
\[
Z(t):=e^{bt}V(X(t)).
\]
Thus by It\^o's formula, we have
\begin{align*}
Z(t)=&V(X(0))+b\int^t_0e^{bs}V(X(s))ds\\
&+\int^t_0e^{bs}\left[f^\top(s,X(s))\cdot X(s)+\frac{1}{2}(g^\top\cdot g)(s,X(s))\right]ds\\
&+\int^t_0f^\top(s,X(s))\cdot X(s)dW(s).
\end{align*}
Therefore,
\begin{align*}
\mathbb{E}[Z(t)]=\mathbb{E}[V(X(0))]+\int^t_0e^{bs}\mathbb{E}\Bigg[&bV(X(s))+f^\top(s,X(s))\cdot X(s)\\
&+\frac{1}{2}(g^\top\cdot g)(s,X(s))\Bigg]ds.
\end{align*}
Note that for all $t\geq 0$ and $x\in\mathbb{R}^n$,
\begin{align*}
&bV(x)+f^\top(t,x)\cdot x+\frac{1}{2}(g^\top\cdot g)(t,x)\\
=&bV(x)+[-a(t)|x|^2x-bx+e(t)]\cdot x\\
&+\frac{1}{2}[c^2(t)|x|^2+2c(t)e^\top(t)\cdot x+|e(t)|^2]\\
\leq&-a(t)|x|^4+\left[-\frac{b}{2}+\frac{1}{2}+c^2(t)\right]|x|^2+\frac{3}{2}|e(t)|^2\\
\leq&(2C+1-b)V(x)+\frac{3m}{2},
\end{align*}
where $C=\sup\limits_{t\in[0,T]} c^2(t)$. Therefore, we have
\begin{align*}
&\mathbb{E}|X(t)|^2\\
=&2\mathbb{E}[V(X(t))]\\
\leq&\mathbb{E}|X(0)|^2+e^{-t}\int^t_0e^{bs}\mathbb{E}\left[(2C+1-b)V(X(t))+\frac{3m}{2}\right]ds\\
\leq&\mathbb{E}|X(0)|^2+\frac{3m}{2b}+\frac{2C+1-b}{2}e^{-t}\int^t_0e^{bs}\mathbb{E}|X(s)|^2ds.
\end{align*} 
Then by Gronwall's inequality \eqref{gron} with
\begin{align*}
&A(t)=\mathbb{E}|X(0)|^2+\frac{3m}{2b},\\
&B(t)=\frac{2C+1-b}{2}e^{-t},\\
&K(t)=e^{bt},
\end{align*}
we have
\begin{align}
&\mathbb{E}|X(t)|^2\nonumber\\
\leq&\mathbb{E}|X(0)|^2+\frac{3m}{2b}+\frac{2C+1-b}{2}\left(\mathbb{E}|X(0)|^2+\frac{3m}{2b}\right)e^{-t}\nonumber\\
&\cdot\int^t_0e^{bs}\frac{2C+1-b}{2(1-b)}\left[e^{(1-b)t}-e^{(1-b)s}\right]ds\nonumber\\
\leq&\mathbb{E}|X(0)|^2+\frac{3m}{2b}\nonumber\\
&+\frac{(2C+1-b)^2}{4b(b-1)}\left(\mathbb{E}|X(0)|^2+\frac{3m}{2b}\right)\cdot\left(1-b-e^{-bt}+be^{-t}\right)\nonumber\\
\leq&\left(\frac{(2C+1-b)^2}{4b(b-1)}+1\right)\left(\mathbb{E}|X(0)|^2+\frac{3m}{2b}\right).\label{egb}
\end{align}
This leads to existence and boundedness in mean square of solutions for \eqref{eg}.

Put $V$ in \eqref{lyaop}. Note that $V$ is autonomous with respect to $t$, we have
\begin{align*}
\mathcal{L}&V(x-y)\\
=&-(x-y)^\top\cdot[a(t)(|x|^2x-|y|^2y)+b(x-y)]\\
&+\frac{1}{2}c(t)(x-y)^\top\cdot c(t)(x-y)\\
=&-a(t)(x-y)^\top(|x|^2x-|y|^2y)+\left(\frac{c^2(t)}{2}-b\right)|x-y|^2.
\end{align*}

Let $H(x)=|x|^2x$. Then we have
\begin{align*}
\frac{\partial H}{\partial x}(x)=&|x|^2I_n+2\diag(x)\left(
\begin{matrix}
1&\cdots&1\\
\vdots&&\vdots\\
1&\cdots&1
\end{matrix}
\right)\diag(x)\\
=&|x|^2I_n+h(x),
\end{align*}
where $h$ is positive definite and thus $\frac{\partial H}{\partial x}$ is positive definite. Hence,
\begin{align*}
\mathcal{L}V(x-y)=&-a(t)(x-y)^\top\left(\int^1_0\frac{\partial H}{\partial(x)}(y+\theta(x-y))d\theta\right)(x-y)\\
&+\left(\frac{c^2(t)}{2}-b\right)|x-y|^2\\
\leq&\left(c^2(t)-2b\right)V(x-y).
\end{align*}

Let $\alpha(t)=c^2(t)-2b$. Since $c$ is continuous $T$-periodic, we have 
\[
\max_{t\in[0,T]}\int_{kT+t}^{kT+T+t}\alpha(s)ds
\]
is uniformly bounded for all $k\in\mathbb{N}$. Thus, by \eqref{egcond},
\[
\int^t_0\alpha(s)ds\rightarrow-\infty
\]
as $t\rightarrow\infty$. This means that condition {\bf (H7)} is fulfilled.

Hence, \eqref{eg} has a mean square asymptotically stable $(Q,T)$-affine periodic solution which is also unique in distribution by Theorem \ref{t2}.

%\begin{appendix}
%\section{abc}
%a
%\section{adf}
%\end{appendix}

\section*{Acknowledgements}
This work was supported by National Basic Research Program of China (grant No. 2013CB834100), National Natural Science Foundation of China (grant No. 11571065) and National Natural Science Foundation of China (grant No. 11171132).

%We thank the anonymous referees for their valuable suggestions and comments.

\end{document}